\newcommand{\EE}{\mathbb{E}}
\newcommand{\PP}{\mathbb{P}}
\newcommand{\RR}{\mathbb{R}}
\renewcommand{\geq}{\geqslant}
\renewcommand{\leq}{\leqslant}
\renewcommand{\epsilon}{\varepsilon}
\def \sclr#1#2{\langle #1,#2\rangle}
\newtheorem{thm}{Theorem} 
\newtheorem{lem}[thm]{Lemma}
\title[Survival probabilities in a cone]{The generating function of the survival probabilities in a cone is not rational}
\author[R.~Garbit]{Rodolphe Garbit}
\address{Universit\'e d'Angers, CNRS, Laboratoire Angevin de Recherche en Math\'ematiques, SFR MATHSTIC, 49000 Angers, France}
\email{rodolphe.garbit@univ-angers.fr}
\author[K.~Raschel]{Kilian Raschel}
\address{CNRS and Universit\'e d'Angers, Laboratoire Angevin de Recherche en Math\'ematiques, SFR MATHSTIC, 49000 Angers, France}
\email{raschel@math.cnrs.fr}
\thanks{This project has received funding from the European Research Council (ERC) under the European Union's Horizon 2020 research and innovation programme under the Grant Agreement No.\ 759702.}
\keywords{Random walks in cones; Survival probabilities; Generating functions; Rational functions; Laplace transform; Univariate singularity analysis}
\date{\today}
\begin{document}

\begin{abstract}
We look at multidimensional random walks $(S_n)_{n\geq 0}$ in convex cones, and address the question of whether two naturally associated generating functions may define rational functions. The first series is the one of the survival probabilities $\PP(\tau>n)$, where $\tau$ is the first exit time from a given cone; the second series is that of the excursion probabilities $\PP(\tau>n,S_n=y)$. Our motivation to consider this question is twofold: first, it goes along with a global effort of the combinatorial community to classify the algebraic nature of the series counting random walks in cones; second, rationality questions of the generating functions are strongly associated with the asymptotic behaviors of the above probabilities, which have their own interest. Using well-known relations between rationality of a series and possible asymptotics of its coefficients, recent probabilistic estimates immediately imply that the excursion generating function is not rational. Regarding the survival probabilities generating function, we propose a short, elementary and self-contained proof that it cannot be rational neither.
\end{abstract}

\maketitle 

\section{Introduction}

\subsection*{Main result and our approach}
For a $d$-dimensional random walk $(S_n)_{n\geq 0}$ with integrable and independent increments $X_n=S_n-{S_{n-1}}$   having  common distribution $\mu$, we consider the generating function
\begin{equation}
\label{eq:gen_func}
   F(t)=\sum_{n\geq 0} a_n t^n=\sum_{n\geq 0} \PP^x(\tau>n)t^n,
\end{equation}
where $\PP^x$ is a probability distribution under which the random walk starts at $S_0=x$ and $\tau$ denotes the first exit time from a given cone $K$, i.e.,
\begin{equation*}
   \tau=\inf\{n>0 : S_n\notin K\}.
\end{equation*}   
See \eqref{eq:gen_func_ex} for an explicit computation of \eqref{eq:gen_func} in a simple one-dimensional example. Our first main result can be stated as follows:
\begin{thm}
\label{thm:main}
If the drift $m=\EE X_1$ is not interior to the cone $K$, and if four further assumptions (to be introduced in \ref{hyp:A1}--\ref{hyp:A4} below) are satisfied, then the generating function $F(t)$ in \eqref{eq:gen_func} is not a rational function.
\end{thm}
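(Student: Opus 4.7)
The plan is to argue by contradiction: we assume $F(t)$ is rational and derive a mismatch with the true asymptotic behaviour of $a_n=\PP^x(\tau>n)$. First, since the coefficients $a_n$ are non-negative and non-increasing, Pringsheim's theorem forces the dominant singularity of $F$ to lie on the positive real axis at the radius of convergence $R$, and rationality makes that singularity a pole. The partial-fraction expansion of $F$ around $t=R$ then yields
\begin{equation*}
  a_n \;\sim\; c\,n^{k}\rho^{n}\qquad (n\to\infty)
\end{equation*}
for some $c>0$, some integer $k\geq 0$, and $\rho=1/R\in(0,1]$. The goal is to rule this out by exhibiting an asymptotic upper bound of the form $a_n\leq Cn^{-\alpha}\rho^{n}$ with $\alpha>0$.

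Next, I would pin down $\rho$ via the Laplace transform $L(\lambda)=\EE e^{\sclr{\lambda}{X_{1}}}$. For every $\lambda$ in the dual cone $K^{*}$ one has $\mathbb{1}_{\{S_n\in K\}}\leq e^{\sclr{\lambda}{S_n}}$ pointwise, hence $\PP^x(\tau>n)\leq e^{\sclr{\lambda}{x}}L(\lambda)^{n}$; taking the infimum gives $\limsup a_n^{1/n}\leq\rho^{*}:=\inf_{\lambda\in K^{*}}L(\lambda)$. Because $m\notin\mathrm{int}(K)$, the separating hyperplane theorem produces some $\lambda\in K^{*}\setminus\{0\}$ with $\sclr{\lambda}{m}\leq 0$, and strict convexity of $L$ (built into \ref{hyp:A1}--\ref{hyp:A4}) then ensures the infimum $\rho^{*}$ is attained at a non-trivial $\lambda^{*}$; a short exponential tilt by $\lambda^{*}$ delivers a matching lower bound so that $\rho=\rho^{*}$.

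Finally, we upgrade the bare exponential rate to a polynomial correction with a \emph{negative} exponent. After tilting by $\lambda^{*}$, the tilted drift $\nabla L(\lambda^{*})/L(\lambda^{*})$ lies on $\partial K$, so the tilted walk is critical with respect to the cone. Projecting $S_n$ onto a unit vector $\nu$ normal to the relevant boundary facet reduces the tilted survival event to staying in the half-space $\{\sclr{\nu}{\cdot}\geq 0\}$, which by Sparre Andersen's identity is bounded by $Cn^{-1/2}$. Propagating the estimate back through the tilt yields $a_n\leq C'\rho^{n}n^{-1/2}$, incompatible with $a_n\sim cn^{k}\rho^{n}$ for any integer $k\geq 0$. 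The main obstacle --- and the place where the paper's promised ``elementary and self-contained'' character will be tested --- is precisely this last reduction: the delicate point is to control the weight $e^{-\sclr{\lambda^{*}}{S_n-x}}$ appearing under the tilted expectation so that the $n^{-1/2}$ factor coming from the one-dimensional projection survives, which will call for a careful but standard argument on the joint behaviour of the tilted walk and of the linear statistic $\sclr{\lambda^{*}}{S_n}$.
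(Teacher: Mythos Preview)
Your overall strategy---exponential tilt by a minimizer $\lambda^*$ of $L$ over $K^*$, then a one-dimensional projection to extract decay---is exactly the paper's route. However, you over-complicate the probabilistic side and under-specify the rational side, and the ``main obstacle'' you identify is not an obstacle at all.

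First, the weight $e^{-\sclr{\lambda^*}{S_n-x}}$ is trivially bounded by $1$ on the event $\{\tau>n\}$: since $\lambda^*\in K^*$ and $S_n\in K$, one has $\sclr{\lambda^*}{S_n}\geq 0$. So the tilted expectation is at most $\PP_*^x(\tau>n)$, and you can immediately project onto $t_0$ (or onto some $\tilde t_0\in K^*\setminus\{0\}$ orthogonal to $m$ when $\lambda^*=0$) to get a centered one-dimensional walk. No ``joint behaviour'' argument is needed; this is precisely what the paper does in Lemma~\ref{lem:but_to_zero_anyway}. Second, you aim for $a_n\leq C\rho^n n^{-1/2}$, but the paper shows that merely $a_n/\rho^n\to 0$ suffices: this follows from the elementary fact that a centered non-degenerate one-dimensional walk crosses zero almost surely, avoiding Sparre Andersen entirely. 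The contradiction with rationality then comes from showing that rational-function coefficients of exponential rate $\rho$ satisfy $\limsup|a_n|/\rho^n>0$, which the paper handles in Lemmas~\ref{lem:rational_behavior} and~\ref{lem:Xophe}. Your claim $a_n\sim c\,n^k\rho^n$ with $c>0$ is too quick here: there may be several poles of the same maximal order on $|z|=R$, so one only gets $a_n/\rho^n = n^k\sum_j c_j w_j^n + o(n^k)$ with $|w_j|=1$, and one must argue separately that this expression does not tend to zero. Finally, your assertion that $\lambda^*$ is ``non-trivial'' is false when $m\in\partial K$: then $\nabla L(0)=m\in K=(K^*)^*$, so $0$ satisfies the first-order conditions and is the minimizer; the paper treats this case separately by choosing $\tilde t_0\in K^*\setminus\{0\}$ with $\sclr{\tilde t_0}{m}=0$.
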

Our result covers the famous case of walks with small steps in the quarter plane (with arbitrary weights on the steps), but is actually much more general.

The non-rationality of the generating function \eqref{eq:gen_func} is based on the fact that the numbers $a_n$ don't have an asymptotic behavior that is compatible with the Taylor coefficients of a rational function. More precisely, we identify in Theorem~\ref{thm:exp_rate_and_zero_term} a rate $\rho\in(0,1]$ such that
\begin{equation}
\label{eq:asymptotic_a_n}
   a_n=\rho^n B_n,
\end{equation}
with $B_n$ satisfying
\begin{enumerate}[label=(\roman{*}),ref=(\roman{*})]
   \item\label{it:est1}$\sqrt[n]{B_n}\to 1$,
   \item\label{it:est2}$B_n\to 0$.
\end{enumerate}
Using then classical analytic combinatorics techniques (see in particular Theorem~\ref{thm:rational_behavior_GF} and Lemma~\ref{lem:rational_behavior}), one will directly deduce that the generating function \eqref{eq:gen_func} cannot be rational. 

In other words, the two probabilistic estimates \ref{it:est1} and \ref{it:est2} are all we need to prove. In this short paper, we aim at providing proofs of these asymptotic behaviors which are self-contained, and as simple and elementary as possible. Item \ref{it:est1} (in particular the value of the rate $\rho$) is already obtained in \cite{GaRa13}, but we shall give here a simplified proof in our simpler setting. In a restrictive particular case, items \ref{it:est1} and \ref{it:est2} are derived in \cite{Du14}.

\subsection*{Drift inside of the cone}
In case of a drift interior to the cone, the probabilistic behavior is rather constrained as we have $\PP^x(\tau>n)\to \PP^x(\tau=\infty)>0$. The positivity of the escape probability is intuitively clear, based on the law of large numbers and the fluctuations of the random walk; see Lemma~\ref{lem:positive_escape_proba} for a precise statement. Equivalently, in the neighborhood of $t=1$,
\begin{equation*}
    F(t)\sim \frac{\PP^x(\tau=\infty)}{1-t},
\end{equation*}
which contains no contradiction with $F$ being a rational function. However, for one-dimensional walks with bounded jumps, it is proved in \cite[Thm~4]{BaFl-02} that $\PP^x(\tau>n)=\PP^x(\tau=\infty)+\frac{c \rho^n}{n^{3/2}}+\cdots$, with $\rho\in(0,1)$, which is not compatible with $F$ being rational. 

One of the simplest examples for which the rationality of $F$ in \eqref{eq:gen_func} was not solved before the present paper is the following: in the quarter plane $K=\mathbb N^2$, take a uniform distribution $\mu$ on 
\begin{equation*}
     \{(1,0),(0,-1),(-1,0),(0,1),(1,1)\}.
\end{equation*}
Is the generating function $F(t)$ indeed non-rational?

Here, we answer this question and, more generally, solve the problem for the orthant $K=[0,\infty)^d$ and any (weighted) small step walk, i.e., random walk with increments $X_k$ that belong to $\{-1, 0, 1\}^d$ almost surely. If $\PP(X_k\in K)=1$, then the random walk is \textit{trapped} forever in $K$ and $a_n=\PP^x(\tau>n)=1$ for all $n$, so that
$F(t)=\frac{1}{1-t}$ is a rational function. Let us say the walk is \textit{not trapped} if $\PP(X_k\notin K)>0$. Our second main result is the following:
\begin{thm}
\label{thm:inner_drift_not_rational}
For all $d$-dimensional weighted small step walks with a drift interior to the orthant $K=[0,\infty)^d$, not trapped and satisfying \ref{hyp:A2},
the generating function $F(t)$ in \eqref{eq:gen_func} is not rational.
\end{thm}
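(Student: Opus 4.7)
The plan is to adapt the strategy of Theorem~\ref{thm:main} to the interior-drift case by applying the rationality criterion of Lemma~\ref{lem:rational_behavior} to a shifted sequence. Since $m\in\mathrm{int}(K)$ and the walk is not trapped, Lemma~\ref{lem:positive_escape_proba} yields $\ell:=\PP^x(\tau=\infty)>0$, so $a_n=\PP^x(\tau>n)$ decreases to $\ell$. I would decompose
\[
F(t)=\frac{\ell}{1-t}+G(t),\qquad G(t)=\sum_{n\geq 0}b_n\,t^n,\quad b_n:=\PP^x(n<\tau<\infty)=a_n-\ell,
\]
so that, as $\ell/(1-t)$ is rational, $F$ is rational if and only if $G$ is. The task is therefore to show that $G$ is not rational.

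Next I would establish a decomposition $b_n=\rho^n B_n$ for some $\rho\in(0,1)$ with $\sqrt[n]{B_n}\to 1$ and $B_n\to 0$, after which Lemma~\ref{lem:rational_behavior} rules out rationality of $G$. Using $b_n=\sum_{k>n}h_k$ with $h_k:=\PP^x(\tau=k)$, it suffices to prove such a decomposition for $h_n$ and sum. The natural tool is a Cram\'er exponential change of measure: because $\mu$ has bounded support, $\varphi(\lambda):=\EE[e^{\lambda\cdot X_1}]$ is entire, and under~\ref{hyp:A2} strictly log-convex. Together with $\nabla\log\varphi(0)=m\in\mathrm{int}(K)$ and the not-trapped assumption---which forces the convex hull of $\mathrm{supp}(\mu)$ to meet both $\mathrm{int}(K)$ and its complement---standard convex analysis produces $\lambda^*\in\mathbb R^d$ with $\nabla\log\varphi(\lambda^*)\in\partial K$ and $\rho:=\varphi(\lambda^*)\in(0,1)$. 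Under the tilted probability $\tilde\PP$, the walk then has drift on $\partial K$: this is precisely the regime covered by Theorem~\ref{thm:exp_rate_and_zero_term}, yielding $\sqrt[n]{\tilde\PP^x(\tau=n)}\to 1$ and $\tilde\PP^x(\tau=n)\to 0$. The Radon--Nikodym identity
\[
h_n=\rho^n\,e^{\lambda^*\cdot x}\,\tilde{\EE}^x\!\left[e^{-\lambda^*\cdot S_n}\,\mathbf{1}_{\tau=n}\right]
\]
then transports the tilted asymptotics back to $h_n$, and summing over $k>n$ gives the corresponding decomposition for $b_n$.

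The main obstacle is the control of the exponential weight $e^{-\lambda^*\cdot S_n}$ on the event $\{\tau=n\}$. While the boundary-touching coordinate of $S_n$ equals $-1$ thanks to the small-step assumption, the transverse coordinates of $S_n$ could a priori be arbitrary, and would blow up the tilt factor if not controlled. Showing that the expectation above is comparable, up to a bounded factor, to $\tilde\PP^x(\tau=n)$ requires the tilted walk to remain concentrated near $\partial K$ throughout its excursion in $K$; this is the content of a local limit theorem for random walks with drift on $\partial K$ conditioned to stay in $K$, an ingredient that underlies the proof of Theorem~\ref{thm:exp_rate_and_zero_term}. Once this localisation is available, transferring the sub-exponential and vanishing behaviour from $\tilde\PP^x(\tau=n)$ to $h_n$---and hence to $b_n$---is routine, and completes the argument.
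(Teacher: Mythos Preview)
Your overall architecture---decompose $F(t)=\ell/(1-t)+G(t)$ and then show $b_n=a_n-\ell$ is incompatible with Lemma~\ref{lem:rational_behavior}---is exactly what the paper does. But the analytic core of your argument has a genuine gap, and it differs from the paper's in a way that matters.

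The problem is your handling of the weight $e^{-\lambda^*\cdot S_n}$ on the event $\{\tau=n\}$. You correctly identify this as the obstacle, but your proposed resolution is not valid. You claim that a ``local limit theorem for random walks with drift on $\partial K$ conditioned to stay in $K$'' is ``an ingredient that underlies the proof of Theorem~\ref{thm:exp_rate_and_zero_term}''. It is not: that proof uses only an exponential change of measure, the functional CLT for a lower bound, and one-dimensional oscillation of a centered walk for the upper bound---no local limit theorem and no concentration near $\partial K$. So the tool you invoke is simply not available in the paper, and without it the transverse coordinates of $S_n$ on $\{\tau=n\}$ genuinely blow up the tilt factor (for a tilt $\lambda^*=-te_i$, for instance, exits through a face $j\neq i$ leave $(S_n)_i$ unbounded and $e^{t(S_n)_i}$ uncontrolled). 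The assertions on $\lambda^*$ (existence with $\nabla\log\varphi(\lambda^*)\in\partial K$ and $\varphi(\lambda^*)<1$) are also left at the level of ``standard convex analysis produces\ldots''; they need an argument.

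The paper sidesteps all of this with a different representation of $b_n$. Instead of $b_n=\sum_{k>n}\PP^x(\tau=k)$, apply the Markov property \emph{at time $n$}:
\[
   b_n=\PP^x(n<\tau<\infty)=\EE^x\bigl[\mathbf 1_{\tau>n}\,\PP^{S_n}(\tau<\infty)\bigr].
\]
On this event $S_n\in K$, and for $y\in K$ the small-step structure and the exact one-dimensional formula give the two-sided bound $\PP^y(\tau<\infty)=\Theta\bigl(\sum_{i\in I}\gamma_i e^{-s_i y_i}\bigr)$, where $e^{-s_i}=\gamma_i=\PP(X^{(i)}=-1)/\PP(X^{(i)}=1)$. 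Hence $b_n=\Theta\bigl(\sum_{i\in I}\gamma_i\,\EE^x[\mathbf 1_{\tau>n}\,e^{-s_i(S_n)_i}]\bigr)$. Each summand is \emph{exactly} the Radon--Nikodym identity for a tilt along the coordinate axis $-s_ie_i$; because $L(-s_ie_i)=1$ (the one-dimensional identity), it equals $e^{-s_i x_i}\,\PP^x_{*i}(\tau>n)$ with no residual weight. Under $\PP^x_{*i}$ the drift has $i$-th coordinate $-m_i<0$, so Theorem~\ref{thm:exp_rate_and_zero_term} applies directly to $\PP^x_{*i}(\tau>n)$ and yields $\rho_i^nB_{i,n}$ with $\rho_i<1$. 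No local limit theorem, no transverse-coordinate issue, and the event is $\{\tau>n\}$ throughout, never $\{\tau=n\}$.
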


Here again, the non-rationality of the generating $F(t)$ is obtained as a consequence of estimates on $a_n=\PP^x(\tau>n)$. More precisely, in Theorem~\ref{thm:inner_drift}, we prove that
\begin{equation}
\label{eq:two-term_estimate}
   \PP^x(\tau>n)=\PP^x(\tau=\infty)+\Theta(\rho^n B_n),
\end{equation}
where $\rho\in (0,1)$ and $B_n$ satisfies $\sqrt[n]{B_n}\to 1$ and $B_n\to 0$, and the notation $f_n=\Theta(g_n)$ means that there exist constants $0<c<C$ such that $cg_n\leq f_n\leq Cg_n$.

We could have unified the presentation of the interior and non-interior drift case estimates, since $\PP^x(\tau=\infty)=0$ when the drift is not in $K^o$. However, we choose not to do so because the last double-sided estimate \eqref{eq:two-term_estimate} is obtained only in the small step walk setting.
We leave open the general case of this interesting interior drift problem.

\subsection*{Combinatorial motivations}

Up to a scaling of the $t$-variable, our framework is equivalent to a more combinatorial question, related to the enumeration of walks. More precisely, in case $\mu$ is a uniform distribution on a finite set $\mathcal S$ (with cardinality $\vert\mathcal S\vert$), one has
\begin{equation*}
   F(\vert \mathcal S\vert t) = \sum_{n\geq0} q_n t^n,
\end{equation*}
where $q_n$ denotes the number of walks starting from $x$, having length $n$ and staying in the cone $K$. More generally, when $\mu$ is any distribution, the series $F(t)$ counts the numbers of $\mu$-weighted walks of length $n$ staying in the cone $K$. Accordingly, all our results admit direct combinatorial interpretations.

Recently, in the combinatorial literature, the seminal paper \cite{BoMi10} inspired the following question, which has attracted a lot of attention: given an orthant $K=\mathbb N^d=\{0,1,\ldots\}^d$ and a distribution $\mu$ on $\mathbb Z^d$ (a step set in the combinatorial terminology), is the generating function \eqref{eq:gen_func}, or its refined version
\begin{equation}
\label{eq:gen_func_refined}
   F(x_1,\ldots,x_d;t)=\sum_{n\geq 0}\sum_{(n_1,\ldots,n_d)\in\mathbb N^d} \PP^x(\tau>n,S_n=(n_1,\ldots,n_d))x_1^{n_1}\cdots x_d^{n_d} t^n
\end{equation}
a rational function? An algebraic function? A function satisfying a linear (or non-linear) differential equation? A hypertranscendental function, meaning that like Euler's $\Gamma$ function it does not satisfy any differential equation? In this article, we look at a much simpler question, on the possible rationality of the generating function.

Notice the following relation between \eqref{eq:gen_func} and \eqref{eq:gen_func_refined}: $F(1,\ldots,1;t)=F(t)$. On the other hand, $F(0,\ldots,0;t)$ is the generating function of the excursion sequence
\begin{equation*}
   F(0,\ldots,0;t)=\sum_{n\geq 0} \PP^x(\tau>n,S_n=(0,\ldots,0)) t^n,
\end{equation*}
which will be studied (based on earlier literature \cite{DeWa15}) in Section~\ref{sec:excursions}.

\subsection*{Technical assumptions}
In order to present the hypotheses in the statement of our main results, we need to introduce two objects, through which the exponential rate $\rho$ in \eqref{eq:asymptotic_a_n} will be determined:
\begin{itemize}
\item the Laplace transform $L$ of the increment distribution $\mu$:
\begin{equation}
\label{eq:def_Laplace_transform}
     L(t)=\EE \bigl( e^{\sclr{t}{X_k}}\bigr) =\int_{\mathbb R^d}e^{\sclr{t}{y}}\mu(\text{d}y),
\end{equation}
\item the dual cone $K^*$ associated with $K$ (see Figure~\ref{fig:example_dual} for an example of dual cone):
\begin{equation}
\label{eq:def_dual_cone}
     K^*=\{x\in\mathbb R^d: \sclr{x}{y}\geq 0 \mbox{ for all } y\in K\}.
\end{equation}
Obviously, $K^*$ is a closed convex cone.
\end{itemize}

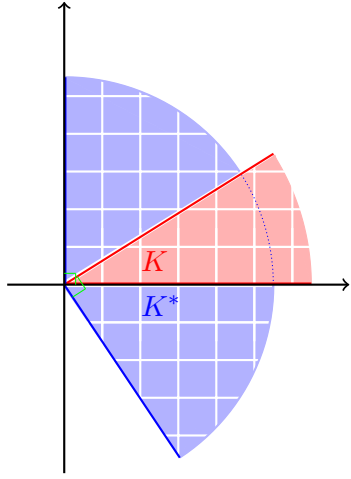
\begin{figure}
\centering
\begin{tikzpicture}
\begin{scope}[scale=0.5]
\draw[red!30,fill=red!30] (0.1,0) -- (6.5,0) -- (5.54,3.44);
\draw[red!30,fill=red!30,domain=0:32.5] plot ({6.5*cos(\x)}, {6.5*sin(\x)});
\draw[blue!30,fill=blue!30,domain=33.5:90] plot ({5.5*cos(\x)}, {5.5*sin(\x)});
\draw[blue!30,fill=blue!30,domain=-56.5:0] plot ({5.5*cos(\x)}, {5.5*sin(\x)});
\draw[blue!30,fill=blue!30] (0,-0.08) -- (5.5,-0.08) -- (3.04,-4.59);
\draw[blue!30,fill=blue!30] (0,0.1) -- (0,5.5) -- (4.65,3);
\draw[white, thick] (-1.5,-1.5) grid (6.5,6.5);
\draw[white, thick] (-1.5,1.5) grid (6.5,-4.5);
\draw[blue,densely dotted,domain=0:32.5] plot ({5.5*cos(\x)}, {5.5*sin(\x)});
\draw[blue!30,thick,domain=33:90] plot ({5.5*cos(\x)}, {5.5*sin(\x)});
\draw[blue!30,thick,domain=-56.5:0] plot ({5.5*cos(\x)}, {5.5*sin(\x)});
\draw[red,thick] (0,0) -- (5.5,3.47);
\draw[red,thick] (0,0.03) -- (6.5,0.03);
\draw[blue,thick] (0,0) -- (3.04,-4.59);
\draw[blue,thick] (0.03,0) -- (0.03,5.5);
\draw[->,thick] (0,-5) -- (0,7.5);
\draw[->,thick] (-1.5,0) -- (7.5,0);
\draw[green] (0,0.3) -- (0.3,0.3);
\draw[green] (0.3,0) -- (0.3,0.3);
\draw[green] (0.34,0.22) -- (0.56,-0.11);
\draw[green] (0.22,-0.33) -- (0.56,-0.11);
\node[above right] at (1.75,0.1) {\textcolor{red}{$K$}};
\node[above right] at (1.75,-1.1) {\textcolor{blue}{$K^*$}};
\end{scope}
\end{tikzpicture}
\caption{A cone $K$ (in red) and its dual cone cone $K^*$ (in blue)}
\label{fig:example_dual}
\end{figure}

Throughout this paper, we make the following assumptions on the cone $K$ and on the distribution $\mu$ of the random walk increments:
\begin{enumerate}[label=(A\arabic{*}),ref=(A\arabic{*})]
   \item\label{hyp:A1}The cone $K$ is convex, closed, with non-empty interior.
   \item\label{hyp:A2}The random walk is truly $d$-dimensional, i.e., there is no $u\not=0$ such that $\sclr{u}{X_1}=0$ almost surely. Moreover, the random walk started at zero can reach the interior $K^o$ of the cone: there exists $k>0$ such that $\PP^0(\tau> k, S_k \in K^o)>0$.
   \item\label{hyp:A3}The random walk increments are $L^1$. We call $m=\EE X_1=\int y \mu(\text{d}y)$ the drift.
   \item\label{hyp:A4}There exists a point $t_0\in K^*$ and a neighborhood $V$ of $t_0$ such that the Laplace transform $L$ of $\mu$ is finite in $V$ and $t_0$ is a minimum point of $L$ restricted to $K^*\cap V$.
\end{enumerate}
Under these assumptions, we proved in \cite{GaRa13} that the exponential rate $\rho$ of the survival probability is equal to $L(t_0)$, meaning that for all $x\in K$,
\begin{equation*}
   \lim_{n\to \infty} \PP^x(\tau>n)^{1/n}= L(t_0).
\end{equation*}
Furthermore, $L(t_0)<1$ if and only if the drift $m$ does not belong to the closed cone $K$.
Here, we shall prove a little bit more:
\begin{thm}
\label{thm:exp_rate_and_zero_term}
Assume hypotheses \ref{hyp:A1}--\ref{hyp:A4} above. If $m\notin K^o$, then 
\begin{equation*}
   \PP^x(\tau>n)=\rho^n B_n,
\end{equation*}
where $\rho=L(t_0)\in (0,1]$, $\sqrt[n]{B_n}\to 1$ and $B_n\to 0$.
\end{thm}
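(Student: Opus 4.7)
The plan is to deduce everything from an exponential change of measure. Writing $B_n=\PP^x(\tau>n)/\rho^n$, the rate $\rho=L(t_0)\in(0,1]$ and the asymptotic $\sqrt[n]{B_n}\to 1$ are already provided by \cite{GaRa13}, so the substantive new content is the statement $B_n\to 0$. I would first introduce the Cram\'er-tilted probability measure $\tilde\mu(\text{d}y)=L(t_0)^{-1}e^{\sclr{t_0}{y}}\mu(\text{d}y)$ and let $\tilde\PP^x$ denote the law of the random walk with step distribution $\tilde\mu$ started at $x$. A direct computation of the Radon--Nikodym derivative on paths of length $n$ then gives
\[
\PP^x(\tau>n)=\rho^n\,e^{\sclr{t_0}{x}}\,\tilde\EE^x\!\left[e^{-\sclr{t_0}{S_n}}\mathbf{1}_{\{\tau>n\}}\right],
\]
so that $B_n=e^{\sclr{t_0}{x}}\tilde\EE^x[e^{-\sclr{t_0}{S_n}}\mathbf{1}_{\{\tau>n\}}]$, and it suffices to show that this tilted expectation tends to zero.

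Because $t_0$ is a minimum of the smooth convex function $L$ restricted to the convex cone $K^*\cap V$, the first-order (KKT) conditions yield
\[
\nabla L(t_0)\in K^{**}=K\quad\text{and}\quad\sclr{\nabla L(t_0)}{t_0}=0.
\]
Hence the drift of the tilted walk, $\tilde m=\nabla L(t_0)/L(t_0)$, belongs to $K$ and satisfies $\sclr{t_0}{\tilde m}=0$. Since a functional $t_0\in K^*\setminus\{0\}$ is strictly positive on $K^o$, this equality forces either $\tilde m=0$ or $\tilde m\in\partial K$. In either sub-case I can select a vector $u\in K^*\setminus\{0\}$ with $\sclr{u}{\tilde m}=0$: if $\tilde m\in\partial K\setminus\{0\}$, take $u$ to be the normal of a hyperplane supporting $K$ at $\tilde m$; if $\tilde m=0$, any $u\in K^*\setminus\{0\}$ works, and such a $u$ exists because $m\notin K^o$ rules out $K=\mathbb R^d$.

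Under $\tilde\PP^x$, the one-dimensional random walk $(\sclr{u}{S_n})_{n\geq 0}$ then has integrable, centered (by $\sclr{u}{\tilde m}=0$) and non-degenerate i.i.d.\ increments; the non-degeneracy part of \ref{hyp:A2} transfers to $\tilde\PP$ because the tilt is an equivalent change of measure. The Chung--Fuchs oscillation theorem gives $\liminf_{n\to\infty}\sclr{u}{S_n}=-\infty$ almost surely under $\tilde\PP^x$. On the event $\{\tau=\infty\}$, however, $S_n\in K$ for every $n$ and so $\sclr{u}{S_n}\geq 0$; therefore $\tilde\PP^x(\tau=\infty)=0$. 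In particular $\mathbf{1}_{\{\tau>n\}}\to 0$ $\tilde\PP^x$-a.s., hence $e^{-\sclr{t_0}{S_n}}\mathbf{1}_{\{\tau>n\}}\to 0$ a.s., and since this quantity is uniformly bounded by $1$ (because $\sclr{t_0}{S_n}\geq 0$ whenever $S_n\in K$), dominated convergence yields $B_n\to 0$.

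The step I expect to demand the most care is the extraction of the KKT identities $\nabla L(t_0)\in K$ and $\sclr{\nabla L(t_0)}{t_0}=0$ from the restricted-minimum property of $t_0$ on $K^*\cap V$, since this is what allows a unified treatment of the sub-cases ($t_0=0$, $t_0\in\partial K^*\setminus\{0\}$, and $t_0$ in the relative interior of $K^*$) and, through the supporting functional $u$, reduces everything to a one-dimensional fluctuation statement. Once this geometric input is in place, the remaining probabilistic ingredients---exponential tilt, Chung--Fuchs oscillation, and dominated convergence---slot together directly.
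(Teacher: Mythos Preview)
Your argument is correct and matches the paper's almost step for step: the same Cram\'er tilt at $t_0$, the same first-order optimality conditions yielding $\tilde m\in K$ and $\sclr{t_0}{\tilde m}=0$ (Lemma~\ref{lem:geometry_of_new_drift} in the paper), and the same reduction of $B_n\to 0$ to the oscillation of a one-dimensional centered non-degenerate walk $\sclr{u}{S_n}$ with $u\in K^*\setminus\{0\}$ orthogonal to $\tilde m$ (the paper's Lemma~\ref{lem:but_to_zero_anyway}). The only cosmetic difference is the case split: the paper distinguishes $t_0\neq 0$ (and then simply takes $u=t_0$) from $t_0=0$, whereas you distinguish $\tilde m\in\partial K\setminus\{0\}$ from $\tilde m=0$. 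One small point to patch: your sentence deducing $\tilde m\in\partial K\cup\{0\}$ from $\sclr{t_0}{\tilde m}=0$ tacitly assumes $t_0\neq 0$; when $t_0=0$ the tilt is trivial, so $\tilde m=m$, and you should instead invoke the hypothesis $m\notin K^o$ directly (together with $\tilde m\in K$ from KKT) to place $\tilde m$ on $\partial K$.
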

Regarding the interior drift case, we shall prove the following estimate in the small step setting:
\begin{thm}
\label{thm:inner_drift}
For all $d$-dimensional weighted small step walks with a drift interior to the orthant $K=[0,\infty)^d$, not trapped and satisfying \ref{hyp:A2}, we have for all $x\in \mathbb{N}^d$
\begin{equation*}
   \PP^x(\tau>n)-\PP^x(\tau=\infty)=\Theta(\rho^n B_n),
\end{equation*}
where $\rho\in (0,1)$ and $B_n$ satisfies $\sqrt[n]{B_n}\to 1$ and $B_n\to 0$.
\end{thm}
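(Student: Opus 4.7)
The plan is to reduce this interior-drift case to the non-interior-drift setting of Theorem~\ref{thm:exp_rate_and_zero_term} via a Cram\'er (exponential) change of measure, analysing the hitting probability $h(y):=\PP^y(\tau<\infty)$ along the way. By the strong Markov property at time $n$,
\[
\PP^x(\tau>n)-\PP^x(\tau=\infty)=\PP^x(n<\tau<\infty)=\EE^x\bigl[\mathbf{1}_{\tau>n}\,h(S_n)\bigr].
\]
Since the walk has bounded support (small steps), the Laplace transform $L(t)=\EE e^{\sclr{t}{X_1}}$ is analytic and strictly convex on $\RR^d$. The interior drift hypothesis $m\in K^o$ makes it possible to choose a distinguished $\lambda_*\in\RR^d\setminus\{0\}$ with $L(\lambda_*)=1$, $\sclr{\lambda_*}{m}<0$, and $\nabla L(\lambda_*)\notin K^o$; for $K=[0,\infty)^d$ one takes $\lambda_*$ on the boundary of $-K^*$, so that some coordinate of $\lambda_*$ vanishes and the others are negative, pinning down a distinguished exit facet of $\partial K$.

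The technical heart of the argument is the sharp exponential control
\[
c_1\,e^{\sclr{\lambda_*}{y}}\leq h(y)\leq c_2\,e^{\sclr{\lambda_*}{y}},\qquad y\in K\cap\mathbb{N}^d,
\]
(understood after separating the contributions of the different exit faces, the dominant one being the face selected by $\lambda_*$). The upper bound follows from optional stopping at $\tau\wedge N$ applied to the mean-one martingale $(e^{\sclr{\lambda_*}{S_n}})_{n}$, combined with dominated convergence as $N\to\infty$ (using $\sclr{\lambda_*}{S_n}\to-\infty$ on $\{\tau=\infty\}$ by the law of large numbers and $\sclr{\lambda_*}{m}<0$) and the fact that $e^{\sclr{\lambda_*}{S_\tau}}$ is bounded below by a positive constant on the exit event, a consequence of the small step structure. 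The lower bound comes from an explicit path construction: from any $y\in K$, one produces an exit trajectory through the face selected by $\lambda_*$ whose probability is at least a constant multiple of $e^{\sclr{\lambda_*}{y}}$, again using both the small step and orthant structure.

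Substituting this estimate into the Markov decomposition and performing the Cram\'er tilt $\PP^x_{\lambda_*}$ (with Radon--Nikodym density $e^{\sclr{\lambda_*}{S_n-x}}$, well-defined because $L(\lambda_*)=1$) gives
\[
\PP^x(n<\tau<\infty)=\Theta\!\left(e^{\sclr{\lambda_*}{x}}\,\PP^x_{\lambda_*}(\tau>n)\right).
\]
Under $\PP^x_{\lambda_*}$ the walk is again a weighted small step walk in $K$, with drift $\nabla L(\lambda_*)\notin K^o$, and hypotheses \ref{hyp:A1}--\ref{hyp:A4} transfer to this tilted walk. Theorem~\ref{thm:exp_rate_and_zero_term} then produces $\PP^x_{\lambda_*}(\tau>n)=\rho^n B_n$ with $\sqrt[n]{B_n}\to 1$, $B_n\to 0$, and $\rho\in(0,1)$ identified via convex duality (as the minimum of the tilted Laplace transform on $K^*$) with the rate announced in the statement.

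The main obstacle is the two-sided estimate on $h$, and specifically the matching lower bound: in dimension $d\geq 2$, $h$ is generically not literally a single exponential, because different exit faces contribute distinct exponential rates. Isolating the dominant face and producing an exit trajectory of matching weight $e^{\sclr{\lambda_*}{y}}$ requires the small step orthant structure in an essential way, and this is where the restriction to small step weighted walks in the orthant enters the proof.
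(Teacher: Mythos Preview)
Your high-level strategy matches the paper's: both use the Markov identity $\PP^x(n<\tau<\infty)=\EE^x[\mathbf{1}_{\tau>n}\,h(S_n)]$, estimate $h(y)=\PP^y(\tau<\infty)$ up to constants by explicit exponentials, pass each exponential through a Cram\'er tilt with $L(\lambda)=1$, and invoke Theorem~\ref{thm:exp_rate_and_zero_term} for the tilted walk. However, there is a genuine gap in your derivation of the two-sided bound on $h$, and the paper closes it by a different (and much simpler) route.

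\textbf{The gap.} Your optional stopping argument does not give the upper bound you claim. Stopping the martingale $e^{\langle\lambda_*,S_n\rangle}$ at $\tau\wedge N$ and letting $N\to\infty$ yields $e^{\langle\lambda_*,y\rangle}=\EE^y\bigl[e^{\langle\lambda_*,S_\tau\rangle};\,\tau<\infty\bigr]$. To extract $h(y)\leq c_2\,e^{\langle\lambda_*,y\rangle}$ from this you would need $e^{\langle\lambda_*,S_\tau\rangle}$ to be bounded \emph{below} on $\{\tau<\infty\}$, and that is false: the exit point $S_\tau$ can have arbitrarily large nonnegative coordinates in directions where $\lambda_*$ is strictly negative (exit through a face other than the one $\lambda_*$ selects), driving $e^{\langle\lambda_*,S_\tau\rangle}$ to zero. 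What optional stopping actually delivers, via the \emph{upper} bound $e^{\langle\lambda_*,S_\tau\rangle}\leq e^{\Vert\lambda_*\Vert_1}$ from the small step structure, is the \emph{lower} bound $h(y)\geq c_1\,e^{\langle\lambda_*,y\rangle}$ --- the direction you attributed to the path construction. Both of your arguments point the same way; neither yields the upper bound, and without it the $\Theta$ collapses to a one-sided estimate. Relatedly, no single $\lambda_*$ can control $h(y)$ uniformly over $y\in K$, since different faces dominate in different regions; your parenthetical hedge acknowledges this but the subsequent single-tilt argument does not implement it.

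\textbf{How the paper does it.} Because $K=[0,\infty)^d$ is the orthant, the exit event decomposes exactly as $\{\tau<\infty\}=\bigcup_{i}\{\exists n:\langle S_n,e_i\rangle<0\}$, giving
\[
\tfrac{1}{d}\,g(y)\;\leq\; h(y)\;\leq\; g(y),\qquad g(y)=\sum_{i\in I}\PP^y\bigl(\exists n:\langle S_n,e_i\rangle<0\bigr),
\]
with $I=\{i:\PP(X_1^{(i)}=-1)>0\}$. Each summand is a one-dimensional nearest-neighbour ruin probability, hence given by the \emph{exact} gambler's-ruin formula $\gamma_i e^{-s_i y_i}$, where $e^{-s_i}=\gamma_i=\PP(X_1^{(i)}=-1)/\PP(X_1^{(i)}=1)$. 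No martingale or path argument is needed. The tilts are then performed coordinate by coordinate at $\lambda_i=-s_ie_i$ (several tilts, not one): since $t\mapsto L(te_i)$ is the Laplace transform of $X_1^{(i)}$, one gets $L(-s_ie_i)=1$ automatically, and the tilted drift has $i$-th coordinate $-\langle m,e_i\rangle<0$, so it lies outside $K$ and Theorem~\ref{thm:exp_rate_and_zero_term} gives $\rho_i<1$. The final rate is $\rho=\max_{i\in I}\rho_i$.
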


\subsection*{A one-dimensional example}
Take a simple random walk on $\mathbb Z$ with jump probabilities $q$ to the left ($-1$) and $p=1-q$ to the right ($+1$). In this setting,
\begin{equation}
\label{eq:def_tau_dim1}
   \tau = \inf\{n>0 : S_n<0\}= \inf\{n>0 : S_n=-1\}.
\end{equation}
It is well known that, for any positive starting point $x\in\mathbb N$, the series \eqref{eq:gen_func} equals
\begin{equation}
\label{eq:gen_func_ex}
   F(t) = \frac{1-\phi(t)^{x+1}}{1-t}, \qquad \text{with } \phi(t)=\frac{1-\sqrt{1-4pqt^2}}{2pt}.
\end{equation}
It is clear that the function $F$ is never rational; however, it defines an algebraic function (as usual for one-dimensional random walks, see \cite{BaFl-02}).

In the zero drift case (meaning that $p=q=\frac{1}{2}$), expanding \eqref{eq:gen_func_ex} at $t=1$ and using singularity analysis, one finds
\begin{equation*}
   \PP^x(\tau>n) \sim (x+1)\sqrt{\frac{2}{\pi}} \frac{1}{n} \qquad \text{(in particular $\rho=1$).}
\end{equation*}
If the drift is negative ($q>p$), the function $F$ in \eqref{eq:gen_func_ex} is analytic at $1$ as $\phi(1)=1$, and the singularities $t=\pm\frac{1}{2\sqrt{pq}}$ will both contribute to the asymptotics, which reads
\begin{equation*}
   \PP^x(\tau>n) \sim  (x+1)\Bigl({\frac{q}{p}}\Bigr)^{(x+1)/2}\left(\frac{1}{\frac{1}{2\sqrt{pq}}-1}+\frac{(-1)^{x+n}}{\frac{1}{2\sqrt{pq}}+1}\right)\frac{({2\sqrt{pq}})^n}{\sqrt{2\pi} n^{3/2}}.
\end{equation*}
Finally, when the drift is positive ($p>q$), the probability of survival admits the following two-term asymptotics (observe the similarity with the negative drift situation)
\begin{equation}
\label{eq:asymp_survival_drift>0}
   \PP^x(\tau>n) = \left(1-\Bigl(\frac{q}{p}\Bigr)^{x+1}\right)+  (x+1)\Bigl({\frac{q}{p}}\Bigr)^{(x+1)/2}\left(\frac{1}{\frac{1}{2\sqrt{pq}}-1}+\frac{(-1)^{x+n}}{\frac{1}{2\sqrt{pq}}+1}\right)\frac{({2\sqrt{pq}})^n}{\sqrt{2\pi} n^{3/2}}+\cdots.
\end{equation}
The three asymptotics above are obtained by studying the singularities of the generating function \eqref{eq:gen_func_ex} and by using classical transfer theorems on the coefficients.

\section{Survival probability estimates in the non-interior drift case: proof of Theorem \ref{thm:exp_rate_and_zero_term}}

\subsection{Basics on the Laplace transform}
Let us first recall some basic properties. 
The Laplace transform of a random vector $X=(X^{(1)}, \ldots, X^{(d)})\in\RR^d$ with probability distribution $\mu$ is the function $L$ defined for $t\in\RR^d$ by
\begin{equation*}
     L(t)= \EE \bigl( e^{\sclr{t}{X}}\bigr)=\int_{\RR^d} e^{\sclr{t}{y}} \mu(\text{d}y).
\end{equation*}
It is finite in some neighborhood of the origin if and only if $\EE \bigl( e^{\alpha \Vert X\Vert}\bigr) $ is finite for some $\alpha>0$.
If $L$ is finite in some neighborhood of the origin, say $\overline{B(0,r)}$, then $L$ is infinitely differentiable in $B(0,r)$ and its partial derivatives are given there by
\begin{equation*}
     \frac{\partial L(t)}{\partial t_i}=\EE \bigl( X^{(i)}e^{\sclr{t}{X}}\bigr).
\end{equation*}
Therefore, the expectation $\EE X=(\EE X^{(1)}, \ldots, \EE X^{(d)})$ of $X$ is equal to the gradient of $L$ at the origin $\nabla L(0)$. Notice that $X$ is centered (i.e., $\EE X=0$) if and only if $0$ is a critical point of $L$. Since $L$ is a convex function, this means that $0$ is a minimum point of $L$ in $\overline{B(0,r)}$.

Now suppose that $L$ is finite in some ball $\overline{B(t_0,r)}$ and define a new probability measure $\mu_*$ by
\begin{equation*}
   \mu_{*}(\text{d}y)=\frac{e^{\sclr{t_0}{y}}}{L(t_0)}\mu(\text{d}y).
\end{equation*}
The Laplace transform $L_*$ of $\mu_*$ is linked to that of $\mu$ by the relation $L_*(t)=L(t_0+t)/L(t_0)$,
and therefore $L_*$ is finite in some neighborhood of the origin. As a consequence, applying the results above shows that any random vector $X_*$ with distribution $\mu_*$ satisfies:
\begin{itemize} 
\item $\EE \left( e^{\alpha \Vert X_*\Vert}\right)< \infty$ for some $\alpha>0$;
\item $\EE X_*= \nabla L(t_0) / L(t_0)$.
\end{itemize}

As we shall see later, the relevant value of $L$ for our problem is its minimum on the dual cone $K^*$ defined by~\eqref{eq:def_dual_cone}.

We now investigate further properties of $\EE X_*$ when $t_0$ satisfies the assumption~\ref{hyp:A4}, i.e., $t_0$ is a local minimum point of $L$ restricted to $K^*$. By convexity of $L$, the point $t_0$ is necessarily a global minimum on $K^*$; we don't assume $t_0$ to be a global minimum on $\mathbb{R}^d$.
Define the two sets 
\begin{equation*}
   S=\bigl\{ u\in\mathbb{R}^{d} : \exists \epsilon>0, \forall s\in [-\epsilon, \epsilon], t_0+ s u \in K^* \bigr\}
\end{equation*}
and  
\begin{equation*}
   S^+=\bigl\{ u\in\mathbb{R}^{d} : \exists \epsilon>0, \forall s\in [0, \epsilon], t_0+ s u \in K^* \bigr\}.
\end{equation*}
Of course $S\subset S^+$. Since $K^*$ is a convex cone, the set $S$ contains at least $t_0$, while the set $S^+$ contains at least $K^*$.
Assuming~\ref{hyp:A4}, we observe the following:
\begin{itemize}
\item if $u$ belongs to $S^+$, then the function $\phi(s)=L(t_0+su)$ defined on some small interval $[0, \epsilon]$ reaches a minimum at $s=0$, hence 
$\phi'(0)=\sclr{\nabla L(t_0)}{u}\geq 0$. Since $K^*\subset S$, the gradient $\nabla L(t_0)$ belongs to the dual cone $(K^*)^*$ associated with $K^*$;
\item if $u$ belongs to $S$, the function $\phi(s)$ defined on some small interval $[-\epsilon, \epsilon]$ reaches its minimum at $s=0$, hence 
$\phi'(0)=0$. Therefore $\nabla L(t_0)$ is orthogonal to $S$ (and so at least to $t_0$ itself).
\end{itemize} 
Translating these observations in terms of the expectation of $X_*$, we obtain: 

\begin{lem}
\label{lem:geometry_of_new_drift}
Assume \ref{hyp:A1} and \ref{hyp:A4}. The expectation $\EE X_*$ of any random vector with distribution $\mu^*$ belongs to the cone $K$ and is orthogonal to $t_0$.
\end{lem}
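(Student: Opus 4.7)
The plan is to combine the two bulleted observations that immediately precede the lemma with the bipolar identity $(K^*)^*=K$, which holds since \ref{hyp:A1} ensures $K$ is a closed convex cone.

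First I would recall that $\EE X_*=\nabla L(t_0)/L(t_0)$ and that $L(t_0)>0$ (being a Laplace transform of a probability measure), so showing $\EE X_*\in K$ is equivalent, since $K$ is a cone, to showing $\nabla L(t_0)\in K$. Applying the first observation to an arbitrary $u\in K^*$: since $K^*\subset S^+$, we obtain $\sclr{\nabla L(t_0)}{u}\geq 0$ for every $u\in K^*$. By definition of the dual cone, this says exactly that $\nabla L(t_0)\in (K^*)^*$. Invoking the bipolar theorem for closed convex cones then yields $\nabla L(t_0)\in K$, hence $\EE X_*\in K$.

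For the orthogonality statement, I would apply the second observation to $u=t_0$. Since $K^*$ is a cone and $t_0\in K^*$, for any $s\in[-1/2,1/2]$ we have $t_0+s t_0=(1+s)t_0\in K^*$, so $t_0\in S$. The second observation then gives $\sclr{\nabla L(t_0)}{t_0}=0$, which after dividing by $L(t_0)>0$ yields $\sclr{\EE X_*}{t_0}=0$, as required.

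The lemma is essentially a repackaging of the two observations, so I do not anticipate a significant obstacle. The only point worth verifying is the bipolar identity $(K^*)^*=K$, but this is a standard fact for closed convex cones containing the origin, exactly the setting provided by \ref{hyp:A1}.
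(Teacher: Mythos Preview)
Your proposal is correct and follows essentially the same route as the paper: the paper's proof simply cites the bipolar identity $(K^*)^*=K$ for closed convex cones and the relation $\EE X_*=\nabla L(t_0)/L(t_0)$, leaving the remaining details to the two bulleted observations preceding the lemma, which you have unpacked explicitly.
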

\begin{proof}
Since $K$ is a closed convex cone, it is well known that $(K^*)^*=K$ (see Exercise 2.31 in \cite{BoVa04} for example). Everything now follows from the relation $\EE X_*=\nabla L(t_0)/L(t_0)$.
\end{proof}

\subsection{Proof of Theorem~\ref{thm:exp_rate_and_zero_term}}
\label{sec:proof_main_thm}
We shall use the preceding $t_0$ and $\mu_*$ in order to perform an exponential change of measure. For any non-negative and measurable function $f:\RR^n\to[0,\infty)$, elementary algebraic manipulations give:
\begin{align*}
\EE^x\bigl(f(S_1,S_2,\ldots, S_n)\bigr) & = \int_{\RR^n} f\left(x+x_1,x+\sum_{i=1}^2 x_i, \ldots, x+\sum_{i=1}^n x_i\right) \prod_{i=1}^n \mu(\text{d}x_i)\\
	 & = \rho^n \int_{\RR^n} f\left(x+x_1,x+\sum_{i=1}^2 x_i, \ldots, x+ \sum_{i=1}^n x_i\right) e^{-\sclr{t_0}{\sum_{i=1}^n x_i}} \prod_{i=1}^n \mu_*(\text{d}x_i)\\
	 & = \rho^n e^{\sclr{t_0}{x}} \EE_{*}^x\left(f(S_1,S_2,\ldots, S_n)e^{-\sclr{t_0}{S_n}}\right),
\end{align*}
where
\begin{itemize}
\item $\rho=L(t_0)$,
\item $\EE^x_{*}$ is the expectation with respect to $\PP_*^x$, a probability distribution under which $(S_n)_{n\geq0}$ is a random walk with increment distribution $\mu_{*}$ and started at $S_0=x$.
\end{itemize}
Taking $f(s_1,\ldots, s_n)=\prod_{i=1}^n 1_K(s_i)$ leads to
\begin{equation}
\label{eq:fundamental_relation}
\PP^x(\tau >n)=\rho^n e^{\sclr{t_0}{x}} \EE_{*}^x\bigl(e^{-\sclr{t_0}{S_n}}, \tau >n\bigr),
\end{equation}
so that Theorem~\ref{thm:exp_rate_and_zero_term} will follow from the two lemmas below:
\begin{lem}
\label{lem:not_exp_fast}
Assume \ref{hyp:A1}--\ref{hyp:A4}. Then, for all $x\in K$, 
\begin{equation*}
   \lim_{n\to\infty} \sqrt[n]{\EE_{*}^x\bigl(e^{-\sclr{t_0}{S_n}}, \tau >n\bigr)}=1.
\end{equation*}
\end{lem}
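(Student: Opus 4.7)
The inequality $\limsup_{n\to\infty}\sqrt[n]{\EE_{*}^x(e^{-\sclr{t_0}{S_n}},\tau>n)}\leq 1$ is immediate: on $\{\tau>n\}$, $S_n\in K$ and $t_0\in K^*$ force $e^{-\sclr{t_0}{S_n}}\leq 1$. My plan for the matching lower bound is to fix $\epsilon>0$ and prove
\[
\PP_*^x(\tau>n,\,\sclr{t_0}{S_n}\leq n\epsilon)\geq e^{-o(n)},
\]
since on this event the integrand is at least $e^{-n\epsilon}$, yielding $\liminf \sqrt[n]{\EE_{*}^x(\cdot)}\geq e^{-\epsilon}$ and then the conclusion by letting $\epsilon\to 0$.

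I would construct enough good paths in two phases. For a \emph{setup phase} of length $Nk_0$ (with $N$ to be tuned), pick $y_0\in K^o$ at distance $d>0$ from $\partial K$ and radius $r\in(0,d)$. Since $\mu_*$ and $\mu$ are equivalent measures, hypothesis~\ref{hyp:A2} transfers to $\PP_*$ and provides $k_0$ and $q_0>0$ with $\PP_*^0(\tau>k_0,S_{k_0}\in B(y_0,r))\geq q_0$. Concatenating $N$ independent blocks of length $k_0$, and using the convex-cone property $K+K\subset K$ to keep all intermediate partial sums in $K$, the Markov property gives
\[
\PP_*^x\bigl(\tau>Nk_0,\,S_{Nk_0}\in x+B(Ny_0,Nr)\bigr)\geq q_0^N.
\]
The elementary geometric fact that $\mathrm{dist}(z+v,\partial K)\geq \mathrm{dist}(z,\partial K)$ whenever $z,v\in K$ (a direct consequence of $K+K\subset K$) then shows that $x+B(Ny_0,Nr)$ sits at distance $\geq N(d-r)$ from $\partial K$.

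In the \emph{free phase} of length $n-Nk_0$ starting from such a deep point $z$, Lemma~\ref{lem:geometry_of_new_drift} provides a drift $m_*\in K$ with $\sclr{t_0}{m_*}=0$, so the whole ray $\{z+s m_*:s\geq 0\}$ remains at distance $\geq N(d-r)$ from $\partial K$ (same geometric fact). The centered walk $Y_m=S_{Nk_0+m}-z-m m_*$ has finite second moment by~\ref{hyp:A4}, so Kolmogorov's maximal inequality yields $\PP_*(\max_{m\leq n}\|Y_m\|\leq C\sqrt{n})\geq \tfrac{1}{2}$ for $C$ large enough. Choosing $N=\lceil C'\sqrt{n}\rceil$ with $C'(d-r)>C$ forces the perturbed trajectory $z+m m_*+Y_m$ to stay inside $K$ for all $m\leq n-Nk_0$, i.e.\ $\tau>n$; and the identity $\sclr{t_0}{m_*}=0$ combined with $\|Y_{n-Nk_0}\|\leq C\sqrt{n}$ gives $\sclr{t_0}{S_n}=O(\sqrt{n})\leq n\epsilon$ for large~$n$. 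Combining the two phases,
\[
\PP_*^x(\tau>n,\,\sclr{t_0}{S_n}\leq n\epsilon)\geq \tfrac{1}{2}\,q_0^{\lceil C'\sqrt{n}\rceil}=e^{-O(\sqrt{n})}=e^{-o(n)},
\]
which is the required estimate. The delicate step is the scale balance $N=\Theta(\sqrt{n})$: the orthogonality $\sclr{t_0}{m_*}=0$ is essential, since otherwise the drift would drive $\sclr{t_0}{S_n}$ linearly in $n$ and destroy the $e^{-n\epsilon}$ lower bound on the integrand.
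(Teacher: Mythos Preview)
Your proof is correct and follows the same two-phase architecture as the paper: a setup phase of $\Theta(\sqrt{n})$ steps pushes the walk to depth $\Theta(\sqrt{n})$ inside $K$, and then the centered walk's $O(\sqrt{n})$ fluctuations keep it inside during the remaining free phase, the orthogonality $\sclr{t_0}{m_*}=0$ from Lemma~\ref{lem:geometry_of_new_drift} being used in both arguments to keep $\sclr{t_0}{S_n}$ sublinear. Two variations are worth noting: you control the fluctuations via Kolmogorov's maximal inequality rather than the Functional CLT and Portmanteau theorem, which is more elementary; and by landing each setup block in a ball $B(y_0,r)\subset K^o$ rather than at a single point, your argument runs directly under the stated hypothesis~\ref{hyp:A2}, whereas the paper's self-contained proof retreats to the stronger hypothesis~\ref{hyp:A2-prime}.
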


\begin{lem}
\label{lem:but_to_zero_anyway}
Assume \ref{hyp:A1}--\ref{hyp:A4}. If the drift $m=\EE X_1$ does not belong to $K^o$, then for all $x\in K$,
\begin{equation*}
   \lim_{n\to\infty} \EE_{*}^x\bigl(e^{-\sclr{t_0}{S_n}}, \tau >n\bigr)=0.
\end{equation*}
\end{lem}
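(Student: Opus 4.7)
My plan is to reduce the statement to proving $\PP_*^x(\tau=\infty)=0$, and then to prove this by projecting the walk onto a well-chosen direction. Since $t_0\in K^*$ and $S_n\in K$ on the event $\{\tau>n\}$, we have $\sclr{t_0}{S_n}\geq 0$ there, so
$$
\EE_*^x\bigl(e^{-\sclr{t_0}{S_n}},\,\tau>n\bigr)\leq \PP_*^x(\tau>n).
$$
Because $\{\tau>n\}$ decreases to $\{\tau=\infty\}$, it is enough to show $\PP_*^x(\tau=\infty)=0$.

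The next step is to exhibit $\ell\in K^*\setminus\{0\}$ with $\sclr{\ell}{\EE X_*}=0$ and to examine the one-dimensional random walk $Z_n=\sclr{\ell}{S_n}$ under $\PP_*^x$. \textbf{If $t_0\neq 0$}, Lemma~\ref{lem:geometry_of_new_drift} lets me take $\ell=t_0$ directly. \textbf{If $t_0=0$}, then $\mu_*=\mu$ and $\EE X_*=\nabla L(0)=m$, which by the same lemma lies in $(K^*)^*=K$; since $m\notin K^o$ by hypothesis we obtain $m\in\partial K$, and a supporting hyperplane to $K$ at $m$ (afforded by the convexity of $K$ from~\ref{hyp:A1}) provides some $\ell\in K^*\setminus\{0\}$ with $\sclr{\ell}{m}=0$.

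In both cases $Z_n-Z_0$ is a sum of i.i.d.\ copies of $\sclr{\ell}{X_*}$ that are centered and integrable; they are also non-degenerate, since otherwise $\sclr{\ell}{X_*}$ would be almost surely equal to its mean $0$, whence $\sclr{\ell}{X_1}=0$ a.s.\ (the measures $\mu$ and $\mu_*$ having the same null sets), contradicting~\ref{hyp:A2} because $\ell\neq 0$. The Chung--Fuchs recurrence theorem now shows that $Z_n$ is recurrent on $\RR$, so it visits $(-\infty,0)$ almost surely. But on $\{\tau=\infty\}$ the walk stays in $K$ forever, forcing $Z_n\geq 0$ for every $n$, a contradiction. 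Hence $\PP_*^x(\tau=\infty)=0$.

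The most delicate part is the selection of $\ell$: the case $t_0=0$ is genuinely separate and needs the supporting-hyperplane construction together with convexity of $K$, whereas the case $t_0\neq 0$ is immediate from Lemma~\ref{lem:geometry_of_new_drift}. Once $\ell$ has been fixed, the rest reduces to classical one-dimensional recurrence, and no quantitative estimate on $\PP_*^x(\tau>n)$ beyond the qualitative fact that it tends to zero is required.
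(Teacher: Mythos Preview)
Your proof is correct and follows essentially the same strategy as the paper's: bound the expectation by $\PP_*^x(\tau>n)$, then project the walk onto a direction $\ell\in K^*\setminus\{0\}$ orthogonal to the new drift (taking $\ell=t_0$ when $t_0\neq 0$, and finding a supporting hyperplane at $m\in\partial K$ when $t_0=0$) to obtain a centered, non-degenerate one-dimensional walk that cannot stay in $[0,\infty)$ forever. The only cosmetic differences are that the paper cites Feller's oscillation result $\liminf R_n=-\infty$ directly rather than Chung--Fuchs, and it obtains $\ell$ in the $t_0=0$ case via an explicit description of $\partial K$ in terms of $K^*$ instead of invoking the supporting-hyperplane theorem; both routes yield the same $\ell$.
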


Lemma~\ref{lem:not_exp_fast} is fully proved in \cite{GaRa13}. However, to make our paper self-contained, we propose here a short proof of it in a simplified setting: Instead of \ref{hyp:A2} we will work under the following hypothesis: 
\begin{enumerate}[label=(A\arabic{*}'),ref=(A\arabic{*}')]
\setcounter{enumi}{1}
   \item\label{hyp:A2-prime}there exist $k>0$ and $z\in K^o$ such that $\PP(\tau>k, S_k=z)>0$.
\end{enumerate}
In the majority of classical lattice random walks, \ref{hyp:A2-prime} is satisfied, as for instance for all $74$ non-singular small step random walks considered in \cite{BoMi10}.
\begin{proof}[Proof of Lemma~\ref{lem:not_exp_fast}]
First observe that on the event $\{\tau >n\}$, we have $S_n\in K$, hence $\sclr{t_0}{S_n}\geq 0$ since $t_0\in K^*$. As a consequence
$\EE_{*}^x\bigl(e^{-\sclr{t_0}{S_n}}, \tau >n\bigr)\leq \PP_{*}^x(\tau>n)\leq 1$,
and what remains to prove is that
\begin{equation*}
   \liminf_{n\to\infty} \sqrt[n]{\EE_{*}^x\bigl(e^{-\sclr{t_0}{S_n}}, \tau >n\bigr)}\geq 1.
\end{equation*}
By inclusion of events and basic properties of the $n$-th root limit, it suffices to prove the result for $x=0$, in which case we get rid of the $x$ superscript on $\EE_{*}$ and $\PP_{*}$. We compute a lower bound of the expectation as follows:
\begin{equation*}
   \EE_{*}\bigl(e^{-\sclr{t_0}{S_n}}, \tau >n\bigr)\geq e^{-a_n}\PP_{*} \bigl( \vert \sclr{t_0}{S_n}\vert \leq a_n, \tau >n\bigr),
\end{equation*}
with $a_n=n^{3/4}$. The $e^{-a_n}$ term goes to $1$ in the $n$-th root limit, thus we focus on the probability in the right-hand side.

Assuming \ref{hyp:A2-prime}, we can use the first $k\lfloor\sqrt{n}\rfloor$ steps to push the walk $\lfloor\sqrt{n}\rfloor$ times in the direction $z$ without leaving the cone: by inclusion of events and the Markov property, we have
\begin{equation*}
   \PP_{*} \bigl( \vert \sclr{t_0}{S_n}\vert \leq a_n, \tau >n\bigr)\geq \alpha^{b_n} \PP_{*}^{b_n z} \bigl( \vert \sclr{t_0}{S_{n-kb_n}}\vert \leq a_n, \tau >n-kb_n\bigr),
\end{equation*}
where $\alpha=\PP(\tau>k, S_k=z)>0$ and  $b_n=\lfloor\sqrt{n}\rfloor$. Here again, the $\alpha^{b_n}$ term will disappear in the $n$-th root limit, and the $-kb_n$ does not play any significant role in $n-kb_n$, so we are left to consider the probability
\begin{equation*}
   \PP_{*}^{b_n z} \bigl( \vert \sclr{t_0}{S_{n}}\vert \leq a_n, \tau >n\bigr).
\end{equation*}
At this point, we take into account the ``new drift'' $d=\EE_{*}X_1$ of the random walk under $\PP_{*}$ and consider the centered random walk $\widetilde{S}_n=S_n-nd$. Lemma \ref{lem:geometry_of_new_drift} asserts that:
\begin{itemize}
\item $d$ is orthogonal to $t_0$, so that $\sclr{t_0}{S_{n}}=\sclr{t_0}{\widetilde{S}_n}$,
\item $d$ belongs to $K$, hence 
\begin{equation*}
   \{\tau(\widetilde{S}_{\ell})>n\}:=\{\widetilde{S}_1, \widetilde{S}_2, \ldots, \widetilde{S}_n \in K\} \subset \{S_1, S_2, \ldots, S_n \in K\}=\{\tau>n\}.
\end{equation*}
\end{itemize}
Due to these facts, our probability can be bounded from below by
\begin{align*}
\PP_{*}^{b_n z} \bigl( \vert \sclr{t_0}{\widetilde{S}_{n}}\vert \leq a_n, \tau(\widetilde{S}_{\ell})>n\bigr) &= \PP_{*}\bigl( \vert \sclr{t_0}{b_nz+\widetilde{S}_{n}}\vert \leq a_n, \tau(b_nz+\widetilde{S}_{\ell})>n\bigr)\\
	&= \PP_{*}\bigl( \vert \sclr{t_0}{z+\widetilde{S}_{n}b_n^{-1}}\vert \leq a_nb_n^{-1}, \tau(z+\widetilde{S}_{\ell}b_n^{-1})>n\bigr)\\
	& \geq \PP_{*}\bigl(\Vert \widetilde{S}_{\ell}b_n^{-1}\Vert < \epsilon \mbox{ for all } \ell= 1, \ldots, n \bigr),
\end{align*}
where we have used the homogeneity of the cone, namely $K/b_n=K$ on the second line, and then chosen $\epsilon>0$ so that the ball $B(z,\epsilon)\subset K$. Now recall that, under $\PP_{*}$, the increments $X_n$ of the random walk $S_n$ have a distribution $\mu^*$ with some exponential moments, hence the $X_n$'s are in $L^2$, and so do the increments $X_n-d$ of the centered random walk $\widetilde{S}_n$. Therefore, the Functional Central Limit Theorem \cite[Thm~8.2]{Bil99} is in force and, in conjunction with Portmanteau Theorem \cite[Thm~2.1]{Bil99}, we obtain
\begin{equation*}
   \liminf_{n\to\infty} \PP_{*}^{b_n z} \bigl( \vert \sclr{t_0}{\widetilde{S}_{n}}\vert \leq a_n, \tau(\widetilde{S}_{\ell})>n\bigr) \geq \PP_*\bigl(\Vert B_t \Vert <\epsilon \mbox{ for all } t\in [0,1] \bigr)>0,
\end{equation*}
where $(B_t)_{t\in [0,1]}$ is the image of a standard Brownian motion started at $0$ under a (possibly degenerate) linear transformation. This concludes the proof of the lemma.
\end{proof}

\begin{proof}[Proof of Lemma~\ref{lem:but_to_zero_anyway}]
The proof will be done separately, according to whether $t_0$ is zero or not. First assume $t_0\not=0$. On the event $\{\tau >n\}$, for all $k=1,\ldots, n$, we have that $S_k \in K$, hence $R_k=\sclr{t_0}{S_k}\geq 0$ since $t_0\in K^*$. Therefore
\begin{equation*}
   \EE_{*}^x\bigl(e^{-\sclr{t_0}{S_n}}, \tau >n\bigr)\leq \PP_{*}^x\bigl(R_k \geq 0 \mbox{ for all }k=1,\ldots,n\bigr).
\end{equation*}
Now, under $\PP_{*}^x$, the process $R_k=\sclr{t_0}{S_k}$ is a random walk with increments $Y_k=\sclr{t_0}{X_k}$ having mean
$\sclr{t_0}{\EE_{*}X_1}= 0$ (see Lemma \ref{lem:geometry_of_new_drift}). Since the initial distribution $\mu$ is truly $d$-dimensional and
 $\mu_{*}$ is absolutely continuous with respect to $\mu$, the new distribution $\mu_{*}$ is also truly $d$-dimensional. Thus, under $\PP_{*}^x$, the increments $Y_k$ are non-degenerate (i.e., it does not hold that $Y_k=0$ almost surely). It is well known (see \cite[Thm~1 \& 2 of XII,2]{Fel71}) that for such a one-dimensional random walk, almost surely,
 \begin{equation*}
    -\infty=\liminf R_n < \limsup R_n =+\infty.
 \end{equation*} 
Accordingly,
\begin{equation*}
   \lim_{n\to\infty}\PP_{*}^x\bigl( R_k\geq 0 \mbox{ for all }k=1,\ldots,n\bigr)=\PP_{*}^x\bigl( R_k\geq 0 \mbox{ for all }k\geq 1\bigr)=0.
\end{equation*}

We now turn to the case $t_0=0$. This time $\sclr{t_0}{S_n}=0$, so we don't learn anything by considering this specific one-dimensional random walk. The idea is to replace $t_0$ with an apropriate $\widetilde{t}_0$ and apply the same argument as before. To do this, observe that we know from Lemma \ref{lem:geometry_of_new_drift} that $\EE_{*} X_1$ belongs to the cone $K$, but when $t_0=0$ the change of measure has no effect: $\mu_{*}=\mu$. Hence the original drift $m=\EE X_1$ belongs to~$K$. Since we assumed $m\notin K^o$, we are left with a drift $m$ on the boundary $\partial K$ of the cone $K$.

If $C$ is a closed cone, the interior of its dual cone has the following description:
\begin{equation*}
   (C^*)^o=\bigl\{ x \in \mathbb{R}^d : \sclr{x}{y}>0 \mbox{ for all }y\in C\setminus\{0\} \bigr\}
\end{equation*}
(see Exercise 2.31(d) in \cite{BoVa04} for example). As a consequence, the boundary is given by
\begin{equation*}
   \partial C^*=\bigl\{ x \in C^* :  \sclr{x}{y}=0 \mbox{ for some }y\in C\setminus\{0\} \bigr\},
\end{equation*}
and applying this to the closed convex cone $C=K^*$ gives
\begin{equation*}
   \partial K=\bigl\{ x \in K :  \sclr{x}{y}=0 \mbox{ for some }y\in K^*\setminus\{0\} \bigr\},
\end{equation*}
since $(K^*)^*=K$. Going back to our drift $m\in \partial K$, there exists some $\widetilde{t}_0\in K^*\setminus\{0\}$ such that $\sclr{\widetilde{t}_0}{m}=0$. Setting $\widetilde{R}_k=\sclr{\widetilde{t}_0}{S_k}$, we obtain a centered and non-degenerate one-dimensional random walk such that $S_k\in K$ implies $\widetilde{R}_k\geq 0$. Therefore
\begin{equation*}
   \EE_{*}^x\bigl(e^{-\sclr{t_0}{S_n}}, \tau >n\bigr)=\PP^x(\tau >n) \leq \PP_{*}^x\bigl(\widetilde{R}_k \geq 0 \mbox{ for all }k=1,\ldots,n\bigr),
\end{equation*}
and the conclusion follows as in the first case.
\end{proof}

The proof of Theorem~\ref{thm:exp_rate_and_zero_term} is complete.

\section{Survival probability estimates in the interior drift case: proof of Theorem~\ref{thm:inner_drift}}

In this section, we restrict our attention to the cone $K=[0,\infty)^d$ and small step walks, i.e., random walks on $\mathbb{Z}^d$ with increments $X_k$ satisfying $X_k\in \{-1,0,1\}^d$ almost surely. For such walks, we investigate the case of a drift $m=\EE X_k$ interior to the cone $K$, i.e., such that $\sclr{m}{e_i}>0$ for $i=1,\ldots, d$, where $(e_1, \ldots, e_d)$ denotes the standard basis of $\mathbb{R}^d$. We will use the notation $X_k^{(i)}=\sclr{X_k}{e_i}$.
Since the drift is in the interior of $K$, we know that
\begin{equation*}
   \lim_{n\to\infty}\PP^x(\tau>n)=\PP^x(\tau=\infty)>0
\end{equation*}
for all $x\in K$; see Lemma~\ref{lem:positive_escape_proba} for a precise statement and a proof.

Here we wish to estimate the error term $\delta_n=\PP^x(\tau>n)-\PP^x(\tau=\infty)$. We exclude the case where $\delta_n=0$ for all $n$ by assuming that the random walk is not trapped, i.e., the increments satisfy $\PP(X_k\notin K)>0$.
Under this assumption we will prove Theorem~\ref{thm:inner_drift}, namely that 
\begin{equation*}
   \PP^x(\tau>n)-\PP^x(\tau=\infty)=\Theta\left(\rho^n B_n\right).
\end{equation*}
Before going into the proof, we collect preliminary estimates on $\PP^x(\tau =\infty)$.

\subsection{Exact formula for one-dimensional small step walk}
First of all, we consider the one-dimensional setting with $p=\PP(X_k=1)$, $r=\PP(X_k=0)$, $q=\PP(X_k=-1)$, $p+r+q=1$. Let $\tau$ be as in \eqref{eq:def_tau_dim1} and assume $m=p-q>0$. Then it is known that, for all $x\in\mathbb{N}$, 
\begin{equation*}
   \PP^x(\tau=\infty)=1-\left(\frac{q}{p}\right)^{x+1}.
\end{equation*}
If $q>0$, this can be rewritten as
\begin{equation}\label{eq:rester_pour_toujours}
   \PP^x(\tau=\infty)=1-\gamma e^{-s x},
\end{equation}
where $\gamma=q/p$ and $s>0$ is the unique solution to $e^{-s}=q/p$.

One way to obtain the formula above is to use the discrete harmonicity of the function $u_x= \PP^x(\tau=\infty)$: by the Markov property, we have $u_x=qu_{x-1}+ru_x+pu_{x+1}$
for all $x\geq 1$, which is solved in
$u_x=a+b\left(\frac{q}{p}\right)^x$. Then $a$ and $b$ are determined through initial and limit behaviors of $u_x$.

For future use, we notice the following fact: let 
\begin{equation*}
   L(t)=\EE\bigl(e^{tX_k}\bigr)=p e^t +r +q e^{-t}
\end{equation*}
be the Laplace transform associated with the random walk increments. Its derivative is given by $L'(t)= p e^t -q e^{-t}$. Evaluating at $t=-s$, where $s$ is as above the solution to $e^{-s}=q/p$, leads to
\begin{equation}
\label{eq:inverser_le_drift}
L(-s)=1 \quad \mbox{ and }\quad L'(-s)=q-p=-m<0.
\end{equation}
The last value is exactly the opposite of the drift.

\subsection{Estimate for $\PP^x(\tau<\infty)$ in the $d$-dimensional small step case}
Let us go back to our $d$-dimensional small step walk $(S_n)_n$ with drift $m$ interior to the cone $K=[0,\infty)^d$ and such that 
$\PP(X_k\not\in K)>0$. The simple inclusion of events
\begin{equation*}
   \bigl\{\exists n>0, \sclr{S_n}{e_i} < 0\bigr\} \subset \bigl\{\tau <\infty\bigr\} \subset \cup_{i=1}^d \bigl\{\exists n>0, \sclr{S_n}{e_i}< 0 \bigr\}
\end{equation*}
leads to the bounds
\begin{equation}
\label{eq:encadrement_harmonique}
\frac{g(x)}{d} \leq \PP^x(\tau<\infty) \leq g(x),
\end{equation}
where $g(x)=\sum_{i=1}^d \PP^x(\exists n>0, \sclr{S_n}{e_i}<0)$.
Now, for each $i$, the one-dimensional small step walk $(\sclr{S_n}{e_i})_n$ with increments $X_k^{(i)}$ has a drift
$\EE X_k^{(i)}=\sclr{m}{e_i}>0$. Since $\PP(X_k\not\in K)>0$, the set $I$ of indices $i$ for which $\PP(X_k^{(i)}=-1)>0$ is non-empty, and applying the exact formula \eqref{eq:rester_pour_toujours} of the preceding paragraph, we obtain:
\begin{equation}
\label{eq:somme_uni_dim}
g(x)=\sum_{i\in I} \gamma_i e^{-s_i \sclr{x}{e_i}},
\end{equation}
where
$\gamma_i=\PP( X_k^{(i)}=-1)/\PP(X_k^{(i)}=1)\in (0,1)$ and $s_i>0$ is the unique solution to $e^{-s_i}=\gamma_i$.

\subsection{Proof of Theorem~\ref{thm:inner_drift}}
Fix $x\in \mathbb{N}^d$ and set
\begin{equation*}
   \delta_n=\PP^x(\tau>n)-\PP^x(\tau=\infty)=\PP^x(\tau>n, \mbox{ but }S_m \notin K \mbox{ for some } m>n).
\end{equation*}
By the Markov property of the random walk, we can express $\delta_n$ as follows:
\begin{equation*}
   \delta_n=\EE^x\left(\tau>n, \PP^{S_n}(\tau<\infty)\right),
\end{equation*}
so that inequality \eqref{eq:encadrement_harmonique} leads to
$\delta_n=\Theta(g_n)$, where $g_n=\EE^x(\tau>n, g(S_n))$.
It remains to estimate 
\begin{equation*}
   g_n=\sum_{i\in I} \gamma_i \EE^x\left(\tau>n, e^{ \sclr{S_n}{-s_ie_i} }  \right).
\end{equation*}
To do this, we apply to each term in the sum a specific exponential change of measure. Set
\begin{equation*}
   \mu_{*i}(dy)=\frac{e^{\sclr{-s_ie_i}{y}}}{L(-s_ie_i)} \mu(\text{d}y),
\end{equation*}
where $\mu$ is the common distribution of the increments $X_k$ of the random walk, and $L(t)=\EE(e^{\sclr{t}{X_k}})$ is their Laplace transform. Then basic algebraic manipulations as in Section~\ref{sec:proof_main_thm} lead to
\begin{equation*}
\label{eq:crameratminussi}
   \EE^x\left(\tau>n, e^{ \sclr{S_n}{-s_ie_i} }\right) =L(-s_ie_i)^n e^{\sclr{-s_ie_i}{x}} \PP^x_{*i}\left(\tau>n\right).
\end{equation*}
Now observe that $t\mapsto L(t e_i)=\EE\bigl(e^{t X_k^{(i)}} \bigr)$ is the one-dimensional Laplace transform of the increments $X_k^{(i)}$. Since $s_i$ is the solution to $e^{-s_i}=\gamma_i=\frac{\PP(X^{(i)}_k=-1)}{\PP(X^{(i)}_k=1)}$, we are in the same situation as in \eqref{eq:inverser_le_drift}, so that 
\begin{equation*}
  L(-s_i e_i)=1 \quad \mbox{  and } \quad \frac{\partial L}{\partial t_i}(-s_ie_i)=-\sclr{m}{e_i}<0.
\end{equation*}
Therefore, equation \eqref{eq:crameratminussi} reads
\begin{equation*}
   \EE^x\left(\tau>n, e^{ \sclr{S_n}{-s_ie_i} }\right) = e^{\sclr{-s_ie_i}{x}} \PP^x_{*i}\left(\tau>n\right),
\end{equation*}
and the new drift under $\PP^x_{*i}$, which is given by the gradient of $L$ at the point $-s_i e_i$, has a strictly negative  $i$-th coordinate.
As a consequence, this drift does not belong to the cone $K=[0,\infty)^d$, and it follows from Theorem~\ref{thm:exp_rate_and_zero_term} that
\begin{equation*}
   \PP^x_{*i}(\tau>n)=\rho_i^n B_{i,n},
\end{equation*}
where $\rho_i\in (0,1)$, $\sqrt[n]{B_{i,n}}\to 1$ and $B_{i,n}\to 0$ as $n\to\infty$. Finally, we get
\begin{equation*}
   g_n=\sum_{i\in I} \gamma_i  \rho_i^n B_{i,n},
\end{equation*}
which can be rewritten in the form $g_n=\rho^n B_n$, by selecting 
\begin{equation*}
   \rho=\max\{\rho_i : i \in I\}<1.
\end{equation*}
It is then clear that $\sqrt[n]{B_n}\to 1$ and $B_n\to 0$ and the proof is complete.

\subsection{Positivity of the escape probability}

\begin{lem}
\label{lem:positive_escape_proba}
Assume \ref{hyp:A1} and \ref{hyp:A2}. If the drift $m=\EE X_1$ belongs
to $K^o$, then the function $h(x)=\PP^x(\tau=\infty)$ satisfies:
\begin{enumerate}[label=({\arabic{*}}),ref=({\arabic{*}})]
   \item\label{it1:positive_escape_proba}$h$ is harmonic for the killed random walk, i.e.,
\begin{equation*}
   h(x)=\EE^x(h(S_n), \tau>n ).
\end{equation*}
   \item\label{it2:positive_escape_proba}$h(x)>0$ for all $x\in K$.
   \item\label{it3:positive_escape_proba}$\lim_{t\to\infty}h(tu)=1$ for all $u\in K^o$.
\end{enumerate}
\end{lem}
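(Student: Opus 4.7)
The plan is to prove the three items in the order \ref{it1:positive_escape_proba}, \ref{it3:positive_escape_proba}, \ref{it2:positive_escape_proba}, since the positivity statement \ref{it2:positive_escape_proba} will follow from harmonicity, a slight strengthening of \ref{it3:positive_escape_proba} and the reachability assumption \ref{hyp:A2}. Item \ref{it1:positive_escape_proba} is a direct application of the Markov property at time $n$: since $\{\tau=\infty\}\subset\{\tau>n\}$ and, conditionally on $\mathcal{F}_n$, the process $(S_{n+k})_{k\geq 0}$ is a random walk with step distribution $\mu$ started at $S_n$, we obtain $\PP^x(\tau=\infty\mid\mathcal{F}_n)=\mathbf{1}_{\tau>n}\,h(S_n)$, whence $h(x)=\EE^x(h(S_n),\tau>n)$ by taking expectations.

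For \ref{it3:positive_escape_proba}, fix $u\in K^o$ and pick $r,\epsilon>0$ with $B(u,r)\subset K$ and $B(m,\epsilon)\subset K^o$; by homogeneity $B(tu,tr)\subset K$ and $B(nm,n\epsilon)\subset K^o$ for all $t,n>0$. Write $S_n=tu+\widetilde{S}_n$ where $\widetilde{S}_n=\sum_{i=1}^n X_i$ has a distribution independent of $t$. Given $\eta>0$, the strong LLN provides an integer $N$ such that $\PP(\sup_{n\geq N}\|\widetilde{S}_n/n-m\|>\epsilon/2)<\eta/2$ (the supremum decreases almost surely to $0$ as $N$ grows), and, $N$ being fixed, one then picks $M$ with $\PP(\max_{n\leq N}\|\widetilde{S}_n\|>M)<\eta/2$. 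On the second good event, for $n\leq N$, $S_n\in B(tu,M)\subset B(tu,tr)\subset K$ as soon as $t>M/r$; on the first good event, for $n\geq N$, $\widetilde{S}_n\in B(nm,n\epsilon/2)\subset K^o$, whence $S_n=tu+\widetilde{S}_n\in K^o$ using the inclusion $K+K^o\subset K^o$ valid for every closed convex cone. Combining these two regimes gives $\PP^{tu}(\tau=\infty)\geq 1-\eta$ for $t$ large enough, so $h(tu)\to 1$.

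For \ref{it2:positive_escape_proba}, the key observation is that the above proof of \ref{it3:positive_escape_proba} applies verbatim to any starting point $y\in K$ satisfying $B(y,D)\subset K$ (with $D$ playing the role of $tr$): it yields $h(y)\geq 1-\eta$ as soon as $D$ is large enough, hence $\inf\{h(y):\mathrm{dist}(y,\partial K)\geq D\}\to 1$ when $D\to\infty$. Next, the Minkowski identity $B(a,r)+B(b,s)=B(a+b,r+s)$ combined with $K+K\subset K$ gives $\mathrm{dist}(a+b,\partial K)\geq\mathrm{dist}(a,\partial K)+\mathrm{dist}(b,\partial K)$ for all $a,b\in K$. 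Fix $x\in K$: shrinking $\delta>0$ in \ref{hyp:A2} we obtain $p:=\PP^0(\tau>k,\,\mathrm{dist}(S_k,\partial K)\geq\delta)>0$, and iterating this event over $N$ consecutive blocks of $k$ steps (via the Markov property at times $jk$) while summing the depth gains yields $\PP^x(\tau>Nk,\,\mathrm{dist}(S_{Nk},\partial K)\geq N\delta)\geq p^N>0$. Harmonicity \ref{it1:positive_escape_proba} then gives
\[
h(x)\geq p^N\inf\{h(y):\mathrm{dist}(y,\partial K)\geq N\delta\},
\]
which is positive as soon as $N\delta$ exceeds the threshold coming from the strengthened \ref{it3:positive_escape_proba}. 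The main technical subtlety is precisely this uniform-in-$n$ tail control in \ref{it3:positive_escape_proba}, as one needs to control the trajectory simultaneously on all times $n\geq N$ rather than at a single time.
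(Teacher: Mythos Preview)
Your proof is correct and follows essentially the same overall route as the paper: item \ref{it1:positive_escape_proba} via the Markov property, item \ref{it3:positive_escape_proba} via the strong law of large numbers (splitting into finitely many early steps and the tail controlled by the SLLN), and item \ref{it2:positive_escape_proba} by iterating \ref{hyp:A2} to push the walk deep into the cone and then invoking the near-one survival probability far inside.

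The technical packaging differs. The paper measures ``depth in the cone'' through the family of translated cones $K-kz$ for a fixed $z\in K^o$: it first shows $\bigcup_{k\geq 0}(K-kz)=\RR^d$, then uses the SLLN to deduce $\lim_{k\to\infty}\PP(S_n\in K-kz\text{ for all }n)=1$, which is simultaneously \ref{it3:positive_escape_proba} and the uniform input needed for \ref{it2:positive_escape_proba}. You instead work with $\mathrm{dist}(\cdot,\partial K)$ and exploit its superadditivity $\mathrm{dist}(a+b,\partial K)\geq \mathrm{dist}(a,\partial K)+\mathrm{dist}(b,\partial K)$ on $K$ to accumulate depth over successive $k$-step blocks. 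Your version makes the uniform-in-direction strengthening of \ref{it3:positive_escape_proba} explicit, and the depth bookkeeping is clean; the paper's version is slightly shorter since the translated-cone formulation gives \ref{it3:positive_escape_proba} and the ingredient for \ref{it2:positive_escape_proba} in a single limit statement. Both rely on the same three facts: $K+K\subset K$, $K+K^o\subset K^o$, and the SLLN.
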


\begin{proof}
Item \ref{it1:positive_escape_proba} is just the Markov property applied at time $n$. The relation
is valid disregarding the position of the drift.

We now prove \ref{it2:positive_escape_proba}. \textit{First step.} We begin with a simple geometric fact: For any
$z\in K^o$, the non-decreasing sequence of sets $K-kz$ will ultimately
cover the whole space, i.e., $\cup_{k\geq 0} (K-kz)=\RR^d$. To see this,
select $\epsilon>0$ such that
$B(z,\epsilon)\subset K$. For any $x\in \RR^d$, there exists $k>0$ such
that $\Vert x/k\Vert < \epsilon$, hence $z+\frac{x}{k}$ belongs to $K$.
By homogeneity of $K$, it follows that $kz+x\in K$, i.e., $x\in K-kz$.

\textit{Second step.} Let's consider the random walk $(S_n)$ with drift
$m\in K^o$ and select $\epsilon>0$ such that $B(m,\epsilon)\subset K$.
By the strong law of large numbers
$S_n/n\to m$ almost surely, therefore, for almost all $\omega$, there
exists $n_0=n_0(\omega)$ such that
\begin{equation*}
   n\geq n_0 \Rightarrow \Bigl\Vert \frac{S_n(\omega)}{n}- m \Bigr\Vert <\epsilon
\Rightarrow S_n(\omega)\in K.
\end{equation*}
Considering now the first positions $S_1(\omega), S_2(\omega), \ldots,
S_{n_0-1}(\omega)$, the first step of the proof ensures that there
exists $k\geq 0$ such that they all belong to $K-kz$, where $z\in K^0$
is to be fixed in the last step of the proof. Since $K\subset K-kz$
(recall that $K+K\subset K$), all positions $S_n(\omega), n\geq n_0,$ also
belong to $K-kz$ and we obtain the following:
\begin{equation*}
   \PP\bigl(\cup_{k\geq 0}\{S_n \in K-kz \mbox{ for all }n\geq 0\}\bigr)=1.
\end{equation*}
Since the events inside the probability above form a non-decreasing
sequence, it follows that
\begin{equation}
\label{eq:limit_staying_forever}
   \lim_{k\to\infty}\PP\bigl(S_n \in K-kz \mbox{ for all }n\geq 0\bigr)=1.
\end{equation}

\textit{Last step.} To conclude, we invoke hypothesis \ref{hyp:A2} that
claims the existence of an integer $\ell\geq 1$ such that
$\PP(\tau>\ell, S_\ell\in K^o)>0$. Fix some $u\in K^o$. Since
$K^o=\cup_{\lambda>0}(K+\lambda u)$, there is a $z=\lambda u\in K^o$
such that $\PP(\tau>\ell, S_\ell\in K+z)=p>0$. By the Markov property, a
concatenation of $m$ such $\ell$-steps paths leads to
\begin{equation*}
   \PP(\tau>m\ell, S_{m\ell}\in K+mz)\geq p^m>0.
\end{equation*}
On the other hand, it follows from \eqref{eq:limit_staying_forever} that
there exists $k\geq 0$ such that
\begin{equation*}
   \PP(S_n \in K-kz \mbox{ for all }n\geq 0)\geq 1/2.
\end{equation*}
Now choose $m\geq k$. Since $S_{m\ell}\in K+mz$ and $S_n-S_{m\ell}\in K-kz$ imply
$S_n \in K$, we obtain
\begin{equation*}
   \PP(\tau =\infty )\geq \PP(\tau>m\ell, S_{m\ell}\in K+mz)\times
\PP(S_n \in K-kz \mbox{ for all }n\geq 0)>0.
\end{equation*}
We have just proved that $g(0)>0$. The result follows since $g(x)\geq g(0)$ for
all $x\in K$ by inclusion of events.

We conclude with the proof of \ref{it3:positive_escape_proba}. The limit \eqref{eq:limit_staying_forever} obtained in the
\textit{second step} of Item \ref{it2:positive_escape_proba} can be recast as:
\begin{equation*}
   \lim_{k\to\infty}\PP^{kz}(\tau=\infty)=1,
\end{equation*}
where $z$ is any vector in $K^o$. Since $g(x)=\PP^x(\tau=\infty)$ is
non-decreasing in every direction, we are done.
\end{proof}

\section{Classical singularity analysis for rational functions and two elementary lemmas: Proof of Theorems \ref{thm:main} and \ref{thm:inner_drift_not_rational} }

In this section, we show that our estimates on  $a_n=\PP^x(\tau>n)$ given in Theorems \ref{thm:exp_rate_and_zero_term} and~\ref{thm:inner_drift} are not compatible with the generating function $F(t)=\sum_{n\geq 0}a_nt^n$ being rational. 
The starting point is Theorem IV.9 in \cite{FlaSed09} which asserts the following:
\begin{thm}
\label{thm:rational_behavior_GF}
If $F(z)=\sum_{n\geq 0} a_n z^n$ is a rational function that is analytic at $0$ and has poles at points $\alpha_1, \alpha_2, \ldots, \alpha_k$,  then its coefficients are a sum of exponential-polynomials: there exist $k$ polynomials $P_j$ such that, for $n$ larger than some fixed $n_0$,
\begin{equation*}
   a_n=\sum_{j=1}^k P_j(n)\alpha_j^{-n}.
\end{equation*}
\end{thm}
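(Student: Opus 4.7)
The plan is to prove this classical result by partial fraction decomposition, following the standard analytic combinatorics route. Writing $F(z)=P(z)/Q(z)$ with coprime polynomials $P, Q$, and factoring $Q(z)=c\prod_{j=1}^{k}(z-\alpha_j)^{m_j}$, the poles are precisely $\alpha_1,\ldots,\alpha_k$ with multiplicities $m_j\geq 1$. Since $F$ is analytic at $0$, one has $Q(0)\neq 0$ so all $\alpha_j$ are nonzero, and Euclidean division gives
\begin{equation*}
   F(z)=E(z)+\frac{R(z)}{Q(z)},\qquad \deg R<\deg Q,
\end{equation*}
where $E$ is a polynomial (possibly zero).

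Next I would invoke the partial fraction expansion over $\mathbb{C}$ to write
\begin{equation*}
   \frac{R(z)}{Q(z)}=\sum_{j=1}^{k}\sum_{i=1}^{m_j}\frac{c_{j,i}}{(1-z/\alpha_j)^i}
\end{equation*}
for suitable complex constants $c_{j,i}$. Each simple fraction admits the explicit Taylor expansion (valid for $|z|<|\alpha_j|$)
\begin{equation*}
   \frac{1}{(1-z/\alpha_j)^i}=\sum_{n\geq 0}\binom{n+i-1}{i-1}\alpha_j^{-n}z^n,
\end{equation*}
and $\binom{n+i-1}{i-1}$ is a polynomial in $n$ of degree $i-1$.

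Setting $n_0=\deg E$, for every $n>n_0$ the polynomial part $E(z)$ contributes nothing to $[z^n]F(z)$, so collecting the expansions yields
\begin{equation*}
   a_n=\sum_{j=1}^{k}\left(\sum_{i=1}^{m_j}c_{j,i}\binom{n+i-1}{i-1}\right)\alpha_j^{-n}=\sum_{j=1}^{k}P_j(n)\,\alpha_j^{-n},
\end{equation*}
where $P_j$ is a polynomial of degree at most $m_j-1$, as required. There is no genuine obstacle here: the only nontrivial input is the existence and uniqueness of the partial fraction decomposition, which in turn follows from the Chinese Remainder Theorem applied to $\mathbb{C}[z]$ (or equivalently from the residue calculation $c_{j,m_j}=\lim_{z\to\alpha_j}(1-z/\alpha_j)^{m_j}F(z)$ and induction on multiplicity). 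The whole argument is classical and is exactly the one presented in \cite{FlaSed09}.
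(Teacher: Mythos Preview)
Your argument is correct and is precisely the classical partial fraction proof. Note that the paper does not give its own proof of this theorem: it simply quotes it as Theorem~IV.9 of \cite{FlaSed09}, and the approach you outline is exactly the one presented there.
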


Both estimates in Theorems \ref{thm:exp_rate_and_zero_term} and~\ref{thm:inner_drift} have the following form:
\begin{equation*}
   a_n=a+\Theta(\rho^n B_n),
\end{equation*}
where $a\geq 0$, $\rho\in (0,1]$, $\sqrt[n]{B_n}\to 1$ and $B_n\to 0$. Therefore Theorems~\ref{thm:main} and~\ref{thm:inner_drift_not_rational} asserting the non-rationality of $F$ will follow in both cases from the following elementary lemma.
\begin{lem}
\label{lem:rational_behavior}
Let $c_1, \ldots, c_k$ be distinct non-zero complex numbers and $P_1,\ldots, P_k$ be non-zero complex polynomials. Set $a_n=\sum_{j=1}^k P_j(n) c_j^n$. If $a_n=a+\Theta(\rho^n B_n)$ for some $a\geq 0$, $\rho>0$ and $B_n>0$ such that $\sqrt[n]{B_n}\to 1$, then necessarily $B_n\not\to 0$.
\end{lem}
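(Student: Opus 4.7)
The plan is to reduce to the case $a=0$, isolate the term(s) of largest modulus, and exploit the linear independence of pure exponentials via Ces\`aro averaging. First I would absorb the constant $a$ into the expansion: if $c_j=1$ for some index $j$, replace $P_j$ by $P_j-a$ (and drop that term should the result be identically zero); otherwise append a new term $-a\cdot 1^n$. This preserves the assumptions that the $c_j$ are distinct and non-zero and the polynomials are non-zero, and since $B_n>0$ the resulting sum cannot be empty. We may thus assume $a=0$ throughout.

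Now set $r=\max_j |c_j|$ and $J=\{j:|c_j|=r\}$, writing $c_j=re^{i\theta_j}$ for $j\in J$ (with the $\theta_j$ distinct modulo $2\pi$). Splitting dominant from subdominant terms,
\begin{equation*}
   a_n = r^n g(n) + O\bigl((r')^n n^M\bigr),\qquad g(n):=\sum_{j\in J} P_j(n)e^{in\theta_j},
\end{equation*}
for some $r'<r$ and $M=\max_j\deg P_j$. The crude bound $|g(n)|\leq Cn^D$ with $D=\max_{j\in J}\deg P_j \geq 0$, combined with the lower half of $|a_n|=\Theta(\rho^n B_n)$, gives $B_n \leq C'n^D (r/\rho)^n$, and taking $n$-th roots the hypothesis $\sqrt[n]{B_n}\to 1$ forces $r\geq \rho$.

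The heart of the argument, and the main obstacle, will be a matching \emph{lower bound} $|g(n)|\geq \delta n^D$ along an infinite subsequence. Writing $g(n)=n^D g_{\mathrm{top}}(n)+O(n^{D-1})$ with
\begin{equation*}
   g_{\mathrm{top}}(n)=\sum_{j\in J_D}\alpha_j e^{in\theta_j}, \qquad J_D=\{j\in J:\deg P_j=D\},
\end{equation*}
and $\alpha_j\neq 0$ the leading coefficient of $P_j$, I would establish $\limsup_{n\to\infty}|g_{\mathrm{top}}(n)|>0$. If instead $g_{\mathrm{top}}(n)\to 0$, then for any fixed $j_0\in J_D$, Ces\`aro's theorem combined with boundedness of the geometric sums $\sum_{n=1}^N e^{in(\theta_j-\theta_{j_0})}$ for $j\neq j_0$ (valid since $\theta_j\not\equiv \theta_{j_0}\pmod{2\pi}$) would give
\begin{equation*}
   0 = \lim_{N\to\infty}\frac{1}{N}\sum_{n=1}^N g_{\mathrm{top}}(n)e^{-in\theta_{j_0}} = \alpha_{j_0},
\end{equation*}
contradicting $\alpha_{j_0}\neq 0$.

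To conclude, pick $\delta>0$ with $|g_{\mathrm{top}}(n)|\geq \delta$ infinitely often; then $|g(n)|\geq (\delta/2)n^D$ and hence $|a_n|\geq (\delta/4)n^D r^n$ for such $n$ large. The upper half of $|a_n|=\Theta(\rho^n B_n)$ then yields $B_n\geq c'' n^D (r/\rho)^n$ along this subsequence. Since $r\geq\rho>0$ and $D\geq 0$, the right-hand side is bounded below by the positive constant $c''$, so $B_n\not\to 0$.
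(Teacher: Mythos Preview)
Your argument is correct and follows the same broad plan as the paper: absorb the constant $a$, isolate the terms of maximal modulus $r$, and use a non-degeneracy statement for finite sums of pure exponentials on the unit circle. The differences are in the execution. The paper proves both inequalities $r\leq\rho$ and $r\geq\rho$ to get $r=\rho$, and for the non-degeneracy it states a separate lemma (its Lemma~\ref{lem:Xophe}) proved by Lagrange interpolation: choosing a polynomial $P$ with $P(z_{j_0})=1$ and $P(z_j)=0$ for $j\neq j_0$ extracts the single term $a_{j_0}z_{j_0}^n$ from a finite linear combination of shifts of the sequence. You instead prove only $r\geq\rho$ (which is all that is needed for the final subsequence bound $B_n\geq c''n^D(r/\rho)^n$) and replace the interpolation trick with Ces\`aro averaging against $e^{-in\theta_{j_0}}$, exploiting the boundedness of geometric sums with ratio on the unit circle but $\neq 1$. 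Your route is marginally more economical in that it avoids the two-sided comparison of $r$ and $\rho$; the paper's interpolation lemma is stated for $|z_j|\geq 1$ rather than $|z_j|=1$, a slight extra generality that is not actually used here.
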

\begin{proof}
If $a_n=\sum_{j=1}^k P_j(n)c_j^n$, then $a_n-a$ has the same form, thus, without loss of generality, we ca assume $a=0$. Write $c_j=r_j z_j$ with $r_j>0$ and $\vert z_j\vert=1$. Let $r=\max\{r_j : j=1,\ldots, k\}$ and let $J$ be the subset of indices $j$ such that $r_j=r$. Then 
\begin{equation*}
   a_n=\sum_{j=1}^k P_j(n) c_j^n= r^n \left(\sum_{j\in J} P_j(n)z_j^n + o(t^n)\right),
\end{equation*}
where $0<t<1$. For future use, note that the numbers $z_j, j \in J$ are all distinct (this is so since we kept at most one $c_j$ in any fixed  ``direction'' $z_j$: the one with maximum modulus).

We first show that $r=\rho$. Since $a_n=\Theta(\rho^nB_n)$ and $\sqrt[n]{B_n}\to 0$, it follows that $a_n/\rho^n$ goes to one in the $n$-th root limit. Thus, for any $\epsilon>0$,
\begin{equation*}
   \bigl( (1-\epsilon)\rho \bigr)^n\leq a_n \leq \bigl( (1+\epsilon)\rho \bigr)^n
\end{equation*}
for $n$ large enough. Therefore
\begin{equation}
\label{eq:encadrement_polynome}
\left( \frac{(1-\epsilon)\rho}{r} \right)^n \leq \left\vert \sum_{j\in J} P_j(n)z_j^n + o(t^n) \right\vert \leq \left( \frac{(1+\epsilon)\rho}{r} \right)^n 
\end{equation}
for $n$ large enough. If $\rho>r$ then we can choose $\epsilon>0$ such that the lower bound is $A^n$ for some $A>1$. But then we would have
\begin{equation*}
   A^n \leq \left\vert \sum_{j\in J} P_j(n)z_j^n + o(t^n)\right\vert\leq \sum_{j\in J}\vert P_j(n)\vert + \vert o(t^n)\vert
\end{equation*}
and this is impossible since $\sum_{j\in J}\vert P_j(n)\vert$ grows polynomially. On the other hand, if $\rho<r$ then we can choose $\epsilon>0$ such that the upper bound in \eqref{eq:encadrement_polynome} is $A^n$ for some $A<1$. This implies that $\sum_{j\in J} P_j(n)z_j^n \to 0$. Dividing this  by $n^p$, where $p$ stands for the maximum degree of polynomials $P_j$, leads to the convergence
\begin{equation*}
   \sum_{j\in J'} a_j z_j^n \to 0,
\end{equation*}
where $J'\subset J$ is a non-empty subset of indices (those $j$ for which $P_j$ has degree $p$) and the $a_j$'s are non-zero complex numbers.
Since the numbers $z_j$ are distinct complex numbers with modulus $1$, this contradicts Lemma~\ref{lem:Xophe} below. 
The assertion $r=\rho$ is now established, hence we have
\begin{equation*}
  \left\vert  \sum_{j\in J} P_j(n)z_j^n + o(t^n) \right\vert=\frac{a_n}{\rho^n}=\Theta(B_n).
\end{equation*}
We've seen just before that this expression cannot go to zero as $n\to\infty$, thus $B_n\not\to 0$.
\end{proof}

\begin{lem}
\label{lem:Xophe}
Let $z_1, \ldots, z_k$ be distinct complex numbers with modulus $\geq 1$. If
\begin{equation*}
   \lim_{n\to\infty}\sum_{j=1}^k a_j z_j^n= 0,
\end{equation*}
then necessarily $a_1=\cdots = a_k=0$.
\end{lem}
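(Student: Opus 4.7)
The plan is a short induction on $k$, using a telescoping trick that eliminates one exponential per step. A Vandermonde argument would work if we knew $\sum_j a_j z_j^n = 0$ identically, but here we only have convergence to zero, so we need a reduction scheme that preserves this weaker hypothesis.

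The base case $k=1$ is immediate: if $a_1 z_1^n \to 0$ with $|z_1|\geq 1$, then $|a_1| = |a_1 z_1^n|/|z_1|^n \leq |a_1 z_1^n|$, which forces $a_1=0$.

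For the inductive step, assuming the conclusion for any family of $k-1$ distinct complex numbers of modulus at least one, let $S_n = \sum_{j=1}^k a_j z_j^n$ and note that both $S_n$ and $S_{n+1}$ tend to $0$. Form the linear combination
\begin{equation*}
T_n := S_{n+1} - z_k S_n = \sum_{j=1}^{k-1} a_j (z_j - z_k)\, z_j^n,
\end{equation*}
which also tends to $0$ and involves only the $k-1$ distinct numbers $z_1,\ldots,z_{k-1}$, still of modulus $\geq 1$. Applying the inductive hypothesis to the coefficients $b_j := a_j(z_j - z_k)$ yields $b_j=0$ for $j<k$; since $z_j\neq z_k$ by distinctness, we conclude $a_j=0$ for every $j<k$. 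The surviving identity $S_n = a_k z_k^n \to 0$ then forces $a_k=0$ by the base case.

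I anticipate no serious obstacle: the lemma is a classical linear independence statement for geometric sequences, and the telescoping identity above is the canonical device for the induction. The only point worth checking is that, after one application of the reduction, the remaining $z_1,\ldots,z_{k-1}$ still satisfy the hypotheses of the lemma — and this is automatic.
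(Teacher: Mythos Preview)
Your proof is correct. The paper's argument is closely related but packaged differently: instead of eliminating one $z_j$ at a time by induction, it isolates each term $a_j z_j^n$ in a single stroke via Lagrange interpolation. Writing $A_n=\sum_j a_j z_j^n$, one has $\sum_{i=0}^{k-1}\alpha_i A_{n+i}=\sum_j a_j P(z_j) z_j^n$ for $P(z)=\sum_i \alpha_i z^i$; choosing $P$ with $P(z_1)=1$ and $P(z_j)=0$ for $j\neq 1$ gives $a_1 z_1^n\to 0$, hence $a_1=0$. Your inductive step is exactly the special case $P(z)=z-z_k$, iterated; unrolling the recursion reproduces the Lagrange basis polynomial $\prod_{j<k}(z-z_j)$ up to normalization. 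Both routes are short and elementary; yours avoids invoking interpolation by name, while the paper's avoids the induction scaffolding.
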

\begin{proof}
Denote by $A_n$ the quantity $\sum_{j=1}^k a_j z_j^n$. Clearly, given any complex numbers $\alpha_0,\ldots,\alpha_{k-1}$,
\begin{equation}
\label{eq:Lagrange_interpolation}
   \sum_{i=0}^{k-1}\alpha_i A_{n+i}=\sum_{j=1}^k a_jP(z_j)z_j^n,
\end{equation}
where $P(z)=\sum_{i=0}^{k-1}\alpha_i z^i$. We can choose the polynomial $P$ so as to have $P(z_1)=1$ and all other $P(z_j)=0$. We then take the limit of \eqref{eq:Lagrange_interpolation} as $n\to\infty$, using the assumption of Lemma~\ref{lem:Xophe}. We find that the term $a_1z_1^n$ should go to zero, which implies that $a_1=0$, since $\vert z_1\vert\geq 1$. A similar reasoning gives that all $a_j=0$, and thus Lemma~\ref{lem:Xophe} is proved.
\end{proof}

\section{The excursion generating function}
\label{sec:excursions}

In this section, we look at lattice random walks in convex cones. Besides the generating function of the survival probabilities \eqref{eq:gen_func}, it is natural to ask whether the excursion generating function
\begin{equation}
\label{eq:gen_func_exc}
   E(t)=\sum_{n\geq 0} \PP^x(\tau>n,S_n=y)t^n
\end{equation}
can be rational, for given starting and ending points $x,y\in K$. When the cone $K$ is an orthant $\mathbb N^d$ and $x=y=(0,\ldots,0)$, the function $E(t)$ reduces to the series $F(0,\ldots,0;t)$ of \eqref{eq:gen_func_refined}. In order to state the result of this section, we introduce the following assumption:
\begin{enumerate}[label=(A\arabic{*}'),ref=(A\arabic{*}')]
\setcounter{enumi}{3}
   \item\label{hyp:A4-prime} There exists a point $\widetilde t_0\in \mathbb R^d$ and a neighborhood $V$ of $\widetilde t_0$ such that the Laplace transform $L$ of $\mu$ is finite in $V$ and $\widetilde t_0$ is a minimum point of $L$ restricted to $V$.
\end{enumerate}
Since $L$ is a convex function, the point $\widetilde t_0$ above is necessarily a global minimum. If $\mu$ is truly $d$-dimensional (as assumed in \ref{hyp:A2}), the function $L $ is strictly convex and a necessary and sufficient condition for the existence of a global minimum
is that the support of $\mu$ is not included in any closed half-space.
\begin{thm}
\label{thm:main_2}
For any distribution satisfying to \ref{hyp:A1}--\ref{hyp:A3}, \ref{hyp:A4-prime} and such that the random walk takes its values on a lattice, the generating function $E(t)$ in \eqref{eq:gen_func_exc} is not a rational function.
\end{thm}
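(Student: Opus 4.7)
The plan is to mirror the strategy used for $F(t)$, with the new point $\widetilde t_0$ of \ref{hyp:A4-prime} in place of $t_0$: establish an asymptotic estimate
$$\PP^x(\tau>n,S_n=y)=\Theta(\widetilde\rho^{\,n}B_n)$$
with $\widetilde\rho=L(\widetilde t_0)$, $\sqrt[n]{B_n}\to 1$ and $B_n\to 0$, and then invoke Lemma~\ref{lem:rational_behavior} with $a=0$ (which is automatic here, since excursion probabilities trivially tend to $0$).

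The first step is the exponential change of measure at $\widetilde t_0$. Because $\widetilde t_0$ is an interior minimum of $L$ in $\mathbb R^d$ (with no dual-cone constraint, contrary to what happened in \ref{hyp:A4}), it is a genuine critical point, so the tilted law
$$\widetilde\mu_*(\text{d}y)=\frac{e^{\sclr{\widetilde t_0}{y}}}{L(\widetilde t_0)}\mu(\text{d}y)$$
has zero mean $\nabla L(\widetilde t_0)/L(\widetilde t_0)=0$. Repeating the algebraic identity of Section~\ref{sec:proof_main_thm} with the test function $f(s_1,\ldots,s_n)=\prod_{i=1}^n 1_K(s_i)\cdot 1_{\{s_n=y\}}$ yields
$$\PP^x(\tau>n,S_n=y)=\widetilde\rho^{\,n}\,e^{\sclr{\widetilde t_0}{x-y}}\,\widetilde\PP_*^{\,x}(\tau>n,S_n=y),$$
reducing the question to the excursion probability of a \emph{centered}, truly $d$-dimensional lattice walk with finite exponential moments in a neighborhood of the origin, confined to the convex cone $K$.

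The second step is to invoke the Denisov--Wachtel local limit theorem \cite{DeWa15} for such centered walks. Under our hypotheses it delivers a polynomial asymptotic
$$\widetilde\PP_*^{\,x}(\tau>n,S_n=y)\sim \frac{C_{x,y}}{n^{\alpha}},$$
where $\alpha>0$ is determined by the principal Dirichlet eigenvalue of the spherical cap $K\cap S^{d-1}$ (together with the covariance of the tilted walk) and $C_{x,y}>0$ provided that $y$ can actually be reached from $x$ without leaving~$K$. Setting $B_n=n^{-\alpha}$, the $\Theta$-estimate of the first paragraph holds with $\sqrt[n]{B_n}\to 1$ and $B_n\to 0$, and Lemma~\ref{lem:rational_behavior} then forbids $E(t)$ from being rational.

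The main obstacle is one of verification rather than of new ideas: one must check that our general framework fits precisely into the scope of \cite{DeWa15}, namely that $K$ belongs to the class of convex cones treated there, that the tilted walk is aperiodic on its lattice (or else apply the local limit theorem along the appropriate sublattice), and that $x$ and $y$ can indeed be joined by a path inside $K$. All of these follow from \ref{hyp:A1}, \ref{hyp:A2}, the lattice hypothesis, and the fact that the exponential tilt preserves the support of $\mu$; the degenerate case in which the excursion probability is identically zero yields $E(t)=0$, which is rational but trivially so, and is implicitly excluded under any minimal reachability assumption on the pair $(x,y)$.
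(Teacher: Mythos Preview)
Your proposal is correct and follows essentially the same route as the paper: both rely on the Denisov--Wachtel local limit theorem to obtain $\PP^x(\tau>n,S_n=y)\sim C(x,y)\,\widetilde\rho^{\,n} n^{-\alpha}$ with $\alpha>0$, and then observe that a negative power of $n$ is incompatible with rationality. The only cosmetic difference is that you make the exponential tilt at $\widetilde t_0$ explicit before citing \cite{DeWa15} for the centered walk, whereas the paper cites the estimate (via \cite{DeWa15,DeWa19}) directly with the $\widetilde\rho^{\,n}$ factor already present; and you route the final step through Lemma~\ref{lem:rational_behavior}, while the paper simply remarks that the exponent of $n$ is negative.
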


Contrary to our elementary and self-contained proof of Theorem~\ref{thm:main}, we don't have any elementary argument to prove Theorem~\ref{thm:main_2}. Instead, we may give a one-line proof based on earlier literature. Indeed, Denisov and Wachtel provide the following estimate in \cite[Eq.~(10)]{DeWa15} (we use the generalization to convex cones as in \cite[Cor.~1.3]{DeWa19}):
\begin{equation*}
   \PP^x(\tau>n,S_n=y) \sim C(x,y) {\widetilde{\rho}}^n n^{-p-d/2},
\end{equation*}
where $\widetilde{\rho}=L(\widetilde t_0)$ with $\widetilde t_0$ as in \ref{hyp:A4-prime}, $d$ is the dimension and $p>0$ is a geometric quantity related to the cone. One immediately concludes because the exponent of $n$ is negative.

\section*{Acknowledgments}
KR would like to thank Cyril Banderier, Mireille Bousquet-M\'elou, Thomas Dreyfus and \'Eric Fusy for preliminary discussions. We thank Vitali Wachtel and Michael Wallner for interesting discussions. We further thank Christophe Devulder for the elegant proof of Lemma~\ref{lem:Xophe}.

 \bibliographystyle{unsrt}
 \makeatletter
 \def\@openbib@code{\itemsep=-3pt}
 \makeatother

\end{document}